\documentclass[11pt]{article}
\usepackage{fullpage,url,times}
\usepackage{leftidx}
\usepackage[affil-it]{authblk}
\usepackage{kbordermatrix}
\usepackage{amsmath}
\usepackage{amssymb}
\usepackage{amsthm}
\usepackage{cite}
\usepackage{color}
\definecolor{mygray}{gray}{.5}
\definecolor{myblue}{rgb}{0,0.7,0.7}
\definecolor{myred}{rgb}{0,0.5,0}


\def \K {\mathbb{K}}
\def \F {\mathbb{F}}
\def \E {\mathbb{E}}

\def \N {\mathbb{N}}
\def \Q {\mathbb{Q}}

\def \Z {\mathbb{Z}}
\def \C {\mathbb{C}}

\newcommand{\lset}{\left \{}
\newcommand{\rset}{\right \}}
\newcommand{\set}[1]{\lset #1 \rset}

\newcommand{\si}{\mathop{\rm si}}
\newcommand{\GL}{\mathop{\rm GL}}
\newcommand{\rank}{\mathop{\rm rank}}
\newcommand{\Span}{\mathop{\rm span}}
\newcommand{\Char}{\mathop{\rm char}}

\newcommand{\Der}{\mathop{\rm Der}}

\newcommand{\remove}[1]{}
\newtheorem{lemma}{Lemma}[section]
\newtheorem{ttheorem}[lemma]{Theorem}

\newtheorem{corollary}[lemma]{Corollary}
\newtheorem{claim}[lemma]{Claim}
\theoremstyle{definition}
\newtheorem{definition}[lemma]{Definition}
\newtheorem{example}[lemma]{Example}
\newtheorem{observation}[lemma]{Observation}
\newtheorem{remark}{Remark}

\newenvironment{proof-sketch}{\trivlist\item[]\emph{Proof Sketch}.}

\begin{document}

\title{Secret-Sharing Matroids need not be Algebraic}
\author{Aner Ben-Efraim\thanks{Dept. of Computer Science, Ben Gurion University of the Negev, Be'er Sheva, Israel. Supported by a grant from the Israeli Science and Technology ministry and by Israel Science Foundation grants 938/09 and 544/13. This paper is part of the author's M.Sc. thesis in the Dept. of Mathematics, Ben Gurion University of the Negev, Be'er Sheva, Israel..}}

\maketitle

\begin{abstract}
We combine some known results and techniques with new ones to show that there exists a non-algebraic, multi-linear matroid. This answers an open question by Mat\' u\v s (Discrete Mathematics 1999), and an open question by Pendavingh and van Zwam (Advances in Applied Mathematics 2013). The proof is constructive and the matroid is explicitly given.
\end{abstract}
{\bf Keywords:} Algebraic Matroids, Multi-Linear Matroids.

\section{Introduction}\label{sec:intro}
The aim of this paper is to compare two natural extensions of linear representations of matroids: multi-linear representations and algebraic representations. Specifically, we show that there exists a non-algebraic, multi-linearly representable matroid. 

\paragraph{Linear matroids.}
The beginning of matroid theory dates back to the work of Whitney in 1935. Whitney defined the matroid axioms as an abstraction of the linear dependence properties of a set of vectors. It is therefore not surprising that the class of linear matroids, i.e., matroids that originate from linear dependence, is probably the most well studied class of matroids. 

\remove{A linear representation is a mapping of the matroid's ground set elements into a vector space over some field, such that dependence is maintained. So by definition, a matroid is linear if and only if it is linearly representable over some field.}

The class of linear matroids has many desirable properties, such as closure to minors and duality. 
Quite early in the study of matroids, MacLane proved that not all matroids are linear. The smallest non-linear matroid was presented by V\' amos~\cite{Vam} (the V\' amos matroid). 

Since linear algebra can also be done over division rings, and the linear dependence over division rings clearly satisfies the matroid axioms, the class of matroids representable over division rings is probably the most natural extension of linear matroids. There are examples of non-linear matroids that are representable over division rings\remove{, called non-commutative matroids in \cite{Ing71b}}, the most well-known being the non-Paupus matroid. These matroids were called non-commutative matroids in \cite{Ing71b}.

\paragraph{Characteristic sets.}
Not all matroids are linear, but even a linear matroid is not necessarily linearly representable over every field. One obstruction is the size of the field. For example, the uniform matroid $U_{2,n}$ is linearly representable over a field $\F$ if and only if $|\F|\geq n-1$.

Clearly, if a matroid is linearly representable over a field $\F$, then it is linearly representable over every extension $\E/\F$. So it is natural to ask: For a given linear matroid $M$ and a field $\F$, does there always exist an extension $\E/\F$ over which $M$ is linearly representable?

The answer to this question is negative, because many linear matroids are not linearly representable over every field of certain characteristics. For example, the non-Fano matroid is linearly representable over $\F$ if and only if $\Char(\F)\neq 2$, and the Fano matroid is linearly representable over $\F$ if and only if $\Char(\F)=2$. This motivates the definition of characteristic sets. The characteristic set of a matroid $M$ is the set $$\chi(M):=\set{k| M {\rm\ is\ linearly\ representable\ over\ some\ field\ of\ characteristic\ } k}.$$

Lazarason~\cite{Laz} constructed a family of matroids that contains, for every prime $p$, a matroid with characteristic set $\set{p}$, i.e., linearly representable only over fields of characteristic $p$. Reid constructed a family of rank-$3$ matroids with the same property. These families of matroids are now called the Lazarason matroids and the Reid geometries respectively.

Another interesting and important example is a family of matroids introduced by Dowling~\cite{Dowlinga,Dowlingb}, based on finite groups. This family of matroids, now called Dowling geometries, contains a matroid of every rank $\geq 3$ for every finite group.\footnote{Dowling matroids can also be defined for rank $<3$, but in this case the Dowling matroids are nothing but the uniform matroids of the same rank. So in this paper we refer only to Dowling geometries of rank $\geq 3$.} Dowling showed that two Dowling geometries are isomorphic if and only if the underlying groups are isomorphic. He also showed that a Dowling geometry of a group $G$ is linearly representable over a field $\F$ if and only if $G$ is a subgroup of $\F^{\times}$ (the group of invertible elements of $\F$). So in terms of characteristics sets, the characteristic set of a Dowling geometry of a non-cyclic group is the empty set, and the characteristic set of a Dowling geometry of a cyclic group $C_m$ is the set of primes not dividing $m$.

\paragraph{Algebraic matroids.}
The class of algebraic matroids comes from the algebraic dependence in transcendental field extensions. In algebraic representations, to each element in the ground set one associates an element of a field $\F$ such that a set of elements is dependent if and only if the corresponding elements are algebraically dependent over a field $\K$, where $\K\subseteq \F$ is a fixed sub-field. If such a representation exists, the matroid is called algebraically representable over $\K$. 

If a matroid is linearly representable over a field, then it is also algebraically representable over the same field (cf. \cite[Proposition 6.7.10]{Oxl11}). Therefore, the class of algebraic matroids extends the class of linear matroids. However, not every algebraic matroid is linear, e.g., the non-Pappus matroid is algebraic~\cite{Lind83}, but not linear. 

It is known that not all matroids are algebraic. For example, Ingleton and Main~\cite{IngMain} showed that the V\' amos matroid is not algebraically representable over any field. It is also known that the class of algebraic matroids is minor closed. However, many fundamental questions regarding algebraic matroids remain open. For example, it is still unknown if the dual of an algebraic matroid is also algebraic. There is also no known general algorithm for determining if a given matroid is algebraic, or even algebraic over a given field.

\paragraph{Algebraic characteristic sets.}
Algebraic characteristic sets are defined similarly to linear characteristic sets. An alternative equivalent definition is the set $$\chi_A(M):=\set{k| M {\rm\ is\ algebraically\ representable\ over\ the\ prime\ field\ of\ characteristic\ } k}.$$\remove{of primes $p$ such that the matroid is algebraically representable over $\F_p$, with $0$ in the characteristic set if the matroid is algebraically representable over $\Q$.} The equivalence of the definitions follows from the following fundamental result: if a matroid $M$ is algebraically representable over a field $\F$, then it is also algebraically representable over the prime field of $\F$. It was conjectured by Piff~\cite{Piff} and proved by Lindst\" om~\cite{Lind89}.

\remove{If a matroid is linearly representable over a field $\F$, then it is also algebraically representable over $\F$ (cf. \cite[Chapter 6]{Oxl11}). Therefore, }The linear characteristic set of a matroid is always contained in its algebraic characteristic set. Using derivations, Ingleton~\cite{Ing71b} showed that if a matroid is algebraically representable over a field $\K$ of characteristic $0$, then it is linearly representable over some extension field $\E$ of $\K$ (in fact, as Ingleton pointed out, the proof was already known, e.g., \cite{Zar58}, but not stated in the language of matroids). So $0$ is in the algebraic characteristic set if and only if it is in the linear characteristic set.

But for positive characteristics the situation changes. For example, Ingleton~\cite{Ing71b} presented an algebraic matroid that contains both the Fano and the non-Fano matroids as minors. Lindstr\" om~\cite{Lind83} showed that the algebraic characteristic set of the non-Pappus matroid contains all primes.

After the work of Ingleton~\cite{Ing71b}, derivations have been one of the main tools in the study of algebraic representability and algebraic characteristic sets. Using derivations, Lindstr\" om~\cite{Lind85} showed that for every prime $p$, the corresponding Lazarason matroid has algebraic characteristic set $\set{p}$. Gordon~\cite{Gordon88} showed a similar result for the Reid geometries. He also constructed matroids with finite algebraic characteristic sets of cardinality greater than one.

\paragraph{Multi-linear matroids.}
The class of multi-linear matroids developed more from practical reasons. Multi-linear matroids are used in cryptography, in coding theory, and in network coding, e.g., \cite{SA98,Aner,Rou10}.

Multi-linear representations were explicitly defined by Simonis and Ashikmin~\cite{SA98}. Multi-linear representations are defined similarly to linear representations, except that each element is mapped to $k$ vectors, for some fixed $k\in \N$. The matroid rank of a set of elements is obtained by dividing the rank of the relevant subspace by $k$. It is important to note that such a rank function is the rank function of matroid if and only if it is integer valued, which might not be the case when $k>1$. If the rank function is integer valued, the resulting matroid is called multi-linearly representable.

There are multi-linearly representable matroids that are not linear. Simonis and Ashikmin~\cite{SA98} showed that the non-Papus matroid is multi-linearly representable over $\F_3$. Pendavingh and van Zwam~\cite{Zwam13} constructed another multi-linear representation of the non-Papus matroid, over $\Q$. They also showed that the rank-3 Dowling geometry associated with the quaternion group is multi-linearly representable. By combining with the ternary Reid geometry, they constructed a multi-linear matroid that is not representable even over division rings. Beimel et al.~\cite{Aner} showed that the rank-3 Dowling geometry of a group $G$ is multi-linearly representable if and only if $G$ is fixed-point free (see Section \ref{subsec:rep} for the precise definition of this term). By applying this to properly chosen groups, they showed that for every prime $p$ there exists a matroid that admits a $p$-linear representation but does not admit a $k$-linear representation for every $k<p$.

\paragraph{Generalizations of multi-linear representations.}
Brickel and Davenport~\cite{BD91} showed an important natural correlation between a cryptographic primitive called ideal secret-sharing schemes and a class of matroids, now called secret-sharing matroids. This class of matroids is also studied in coding theory, under the name almost affinely representable matroids~\cite{SA98}. Mat\' u\v s~\cite{Mat99} referred to these matroids as partition representable matroids, because they can be defined using partitions of a set satisfying certain constraints. 

The class of secret-sharing matroids contains the class of multi-linear matroids~\cite{SA98}, but it is not known if the class of multi-linear matroids is a proper sub-class. 

Not all matroids are secret-sharing. Specifically, Seymour~\cite{Sey} showed that the V\' amos matroid is not secret-sharing.  Mat\' u\v s~\cite{Mat99} showed that any matroid containing both the Fano and the non-Fano matroids as minors is not secret-sharing. Thus, he showed that there exists an algebraic matroid that is not secret sharing, and therefore also not multi-linear -- the algebraic non-linear matroid given by Ingleton in ~\cite{Ing71b}. Mat\' u\v s~\cite{Mat99} then asked the following natural question: is every secret-sharing matroid algebraic?

Another intriguing form of matroid representations, defined recently by Pendavingh and van Zwam~\cite{Zwam13}, is skew partial field representations. Pendavingh and van Zwam showed that every multi-linearly representation is a special case of a skew partial field representation. So all multi-linear matroids are also skew partial field representable. It is not known if there exists a matroid representable over a skew partial field that is not multi-linear. The relation between skew partial field representable matroids and secret-sharing matroids is also unknown. In their paper, Pendavingh and van Zwam formulated the following open question: is the class of skew partial field representable matroids contained in the class of algebraic matroids?

\paragraph{Our results.}
We will show that there exists a multi-linearly representable, non-algebraic matroid. Since multi-linear matroids are secret-sharing matroids and also skew-partial field representable, this result answers negatively both the question of Mat\' u\v s~\cite{Mat99} and that of Pendavingh and van Zwam~\cite{Zwam13}.

The matroid is constructed as a direct sum of the Reid Geometry $R_p$, with $p=7$, and a new matroid that we introduce, $Q^{NF}_3(\mathsf{Q}_8)$, which contains both the non-Fano matroid and the rank-3 Dowling geometry of the quaternion group as minors.

\paragraph{Organization.} In Section \ref{sec:prelim} we give the definitions and review some necessary basic background material (the reader who feels comfortable with the definitions may skip the appropriate subsections): In Section \ref{subsec:def} we define multi-linear representations (Section \ref{subsec:defml}) and algebraic representations (Section \ref{subsec:defalg}). In Section \ref{subsec:rep} we define fixed-point free representations and groups. In Section \ref{app:dowling} we give the definition of the rank-3 Dowling group geometries\remove{ as it is given in \cite{Aner}}. In Section \ref{sec:derivations} we discuss derivations and their use in the study of algebraic matroids -- we recall the definition and some basic facts (Section \ref{subsec:der}), and then we describe how derivations in certain algebraic representations induce linear representations of (possibly different) matroids (Section \ref{subsec:induced}). 

In Section \ref{sec:known} we recall some of the known results regarding algebraic and multi-linear representability of the Reid geometries and Dowling geometries. 

In Section \ref{sec:main} we state and prove our main theorem. The proofs of two technical lemmas are left to Section \ref{sec:proofs}.

\paragraph{Notation.} When referring to $k$-linear representations, we will frequently use block matrices. To distinguish these block matrices, they will be inside square brackets, or in bold letters (e.g. $\mathbf{A}= \left[\begin{matrix}
A&B\\C&D\end{matrix}\right]$, where $A,B,C,D$ are matrices). In all the proofs and examples all the blocks are square matrices\remove{of size $k\times k$}. For a matrix $A$, we denote  by $A_i$ the $i^{th}$ column of $A$. 
We will treat isomorphic matroids as equal (i.e., if $M\cong N$ we will often write $M=N$ instead). 

\section{Preliminaries}\label{sec:prelim}
\subsection{Extending Linear Representations}\label{subsec:def}
We assume the reader is familiar with matroid basics, such as rank function, linear representations, direct sums, minors, simple matroids, geometric representations of matroids, and with the Fano and the non-Fano matroids. A good introduction to Matroid Theory can be found in \cite{Oxl11}. We therefore proceed directly to defining multi-linear representations and algebraic representations.

\subsubsection{Multi-Linear Representations}\label{subsec:defml}

Multi-linear representations are defined similarly to linear representations, except that each element is associated with a set of $k$ vectors, for some fixed $k$. We will use the definition with matrices.\remove{, instead of the somewhat more common definition, with grassmannians.}

\begin{definition}\label{def:$k$-linear}
Let $M=(E=\{1,...,N\},r)$ be a matroid, $\F$ a field and $k\in \N$. A {\em $k$-linear representation} of $M$ over $\F$ is a matrix $A$ with $k\cdot N$ columns $A_1,...,A_{k\cdot N}$ ($N\in \N$) such that the rank of every set $X=\set{i_{1},...,i_{j}}\subset E$ satisfies $$r(X)=\frac{\dim(\Span(U_{i_{1}}\cup ...\cup U_{i_{j}}))}{k},$$ where $$U_{\ell}=\{A_{(\ell-1)\cdot k+1},A_{(\ell-1)\cdot k+2},...,A_{\ell\cdot k}\}.$$\remove{ for $1\leq \ell \leq N$} If such a representation of $M$ exists then $M$ is called {\em $k$-linearly representable} and $A$ is its $k$-linear representation over $\F$. We will say that a matroid is {\em multi-linear} if it is $k$-linearly representable for some $k\in \N$ over some field $\F$.
\end{definition} 
When $k=1$ this is the regular definition of linear representations. We note that although a matrix with $N$ columns always defines a linear matroid with $N$ elements, for $k>1$ a matrix with $Nk$ columns does not necessarily define a $k$-linear matroid, because when we divide by $k$ the value is not necessarily an integer. However, if the rank of every relevant sub-matrix is a multiple of $k$, then the matroid axioms are satisfied, and we have a $k$-linear matroid.

\subsubsection{Algebraic Representations}\label{subsec:defalg} 
Algebraic matroids come from transcendental field extensions:
\begin{definition}\label{def:algebraic}
Let $M=(E=\{1,...,N\},r)$ be a matroid, $\K$ a field and $\K\subset \F$ a field extension. An {\em algebraic representation} of $M$ over $\K$ is a mapping $\pi\colon E\to \F$ such that the rank of every set $X=\set{i_{1},...,i_{j}}\subset E$ satisfies $r(X)=\deg_{tr}(\K(\pi(i_{1}),...,\pi(i_{j}))/\K)$. If such a representation of $M$ exists then $M$ is called {\em algebraically representable} (or simply {\em algebraic}) and $\pi$ is its algebraic representation over $\K$.
\end{definition}
It is well known that a rank function defined this way (transcendental degree) always satisfies the matroid axioms, and that every linear matroid is algebraic. The proofs can be found, for example, in \cite[Chapter 6.7]{Oxl11}.

A useful fact in studying algebraic representations, which we use in our main theorem, is that we can always assume that $\K$ is the prime field (i.e., $\Q$ or $\F_p$ for some prime $p$). This was conjectured by Piff~\cite{Piff}, who reduced the problem to showing that if $M$ is algebraically representable over $\F(t)$, with $t$ transcendental, then $M$ is algebraically representable over $\F$. The proof was later completed by Lindstr\" om \cite{Lind89}.
\begin{ttheorem}[\textbf{Piff's conjecture}]\label{th:basefield}
If a matroid is algebraic over $\K$, then it is also algebraic over the prime subfield of $\K$.
\end{ttheorem}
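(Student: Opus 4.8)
The plan is to strip the base field $\K$ down to the prime field $k_0$ ($=\Q$ or $\F_p$) in three stages: make it finitely generated, pass to its algebraic closure, then peel off transcendentals one at a time. For the first stage, given an algebraic representation $\pi\colon E\to\F$ of $M$ over $\K$, I would pick for each $X\subseteq E$ a basis $B_X$ of the restriction $M|X$ and, for each $i\in X$, a nonzero polynomial over $\K$ witnessing that $\pi(i)$ is algebraic over $\K(\pi(B_X))$; only finitely many such polynomials occur, and letting $\K_0\subseteq\K$ be generated over $k_0$ by their coefficients makes $\K_0$ finitely generated over $k_0$. The same $\pi$ is still an algebraic representation over $\K_0$: algebraic independence of $\pi(B_X)$ over $\K$ persists over the subfield $\K_0$, while each $\pi(i)$ with $i\in X$ is still algebraic over $\K_0(\pi(B_X))$, so $\deg_{tr}(\K_0(\pi(X))/\K_0)=|B_X|=r(X)$.

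For the second stage I would use that transcendence degree is invariant under algebraic base change, $\deg_{tr}(L(S)/L)=\deg_{tr}(\overline L(S)/\overline L)$ for any field $L$ and any set $S$ in an overfield: this makes ``$M$ algebraic over $L$'' and ``$M$ algebraic over $\overline L$'' equivalent (via the same $\pi$), and in particular a representation over $\overline{k_0}$ is also one over $k_0$. So it suffices to show that $M$ algebraic over $\overline{\K_0}$ implies $M$ algebraic over $\overline{k_0}$. Writing $\overline{\K_0}=\overline{k_0(t_1,\dots,t_d)}$ for a transcendence basis $t_1,\dots,t_d$ of $\K_0/k_0$, I would induct on $d$ (trivial for $d=0$), the inductive step being essentially the statement to which Piff reduced the problem, and whose proof was supplied by Lindstr\"om~\cite{Lind89}: \emph{if $F$ is algebraically closed and $t$ is transcendental over $F$, and $M$ is algebraic over $\overline{F(t)}$, then $M$ is algebraic over $F$} (apply it with $F=\overline{k_0(t_1,\dots,t_{d-1})}$ and $t=t_d$).

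To prove this key statement I would specialize $t$. Since the given representation $\pi\colon E\to\Omega$ over $\overline{F(t)}$ is also a representation over $F(t)$, I may take $\Omega=F(t)(\pi(E))$; fixing a transcendence basis $y_1,\dots,y_e$ of $\Omega/F(t)$ and a primitive element of $\Omega$ over $F(t)(y_1,\dots,y_e)$ and clearing denominators, all of $\pi(E)$ lies in a finitely generated $F[t]$-subalgebra $A\subseteq\Omega$ which is a domain with $\mathrm{Frac}(A)=\Omega$ and $\dim A=e+1$. For a general $a\in F$ (only finitely many values are forbidden, and $F$ is infinite) I would pass to $\bar A:=A/\mathfrak p$, $\mathfrak p$ a minimal prime of $A$ over $(t-a)$, a domain with $\deg_{tr}(\mathrm{Frac}(\bar A)/F)=e$, and set $\bar\pi:=(A\twoheadrightarrow\bar A)\circ\pi$; the claim to verify is $\deg_{tr}(F(\bar\pi(X))/F)=r(X)$ for every $X$. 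The inequality ``$\le$'' is routine: a dependent $Y\subseteq X$ with $|Y|=r(X)+1$ yields a nonzero $f\in F[t][x_Y]$ with $f(\pi(Y))=0$, and for all $a$ outside the finite common zero locus of the $t$-coefficients of $f$ the specialization $f(a,\,\cdot\,)$ is a nonzero polynomial vanishing on $\bar\pi(Y)$.

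The hard part — which I expect to be the main obstacle — is the reverse inequality, i.e.\ that algebraic \emph{independence} survives specialization. Fixing a basis $B_X$ of $M|X$, independence of $\pi(B_X)$ over $F(t)$ says that the morphism $\mathrm{Spec}(A)\to\mathbb{A}^{|B_X|}_F\times\mathbb{A}^1_F$ given by the $B_X$-coordinates together with $t$ is dominant, and what is needed is that for general $a$ the component $\mathrm{Spec}(\bar A)$ of the fibre over $t=a$ still dominates $\mathbb{A}^{|B_X|}_F$, equivalently that $\bar\pi(B_X)$ is algebraically independent over $F$. This is where Chevalley's theorem on constructibility of images and the semicontinuity of fibre dimension (generic flatness) enter: the image of the morphism contains a dense open subset of $\mathbb{A}^{|B_X|}_F\times\mathbb{A}^1_F$, on a smaller dense open the fibres have the expected dimension, and a general $a$ puts the line $\{t=a\}$ in good position relative to these. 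The genuine difficulty is to make a single choice of $a$ — and of the component $\mathfrak p$ — that works simultaneously for all $X\subseteq E$ while ruling out that the special fibre ``jumps'' and enlarges the matroid; this bookkeeping is exactly the content of~\cite{Lind89}. Granting it, $\bar\pi$ represents $M$ over $F$, closing the induction. (In characteristic $0$ one can shortcut the hardest step, since by Ingleton's theorem~\cite{Ing71b} $M$ is then linear over some extension field of $\K$; collecting the finitely many matrix entries and specializing the transcendentals to general values — only finitely many minors, corresponding to bases, must stay nonzero, the non-basis maximal minors vanishing identically — makes $M$ linear, hence algebraic, over a number field, hence algebraic over $\Q$ by the algebraic-closure step.)
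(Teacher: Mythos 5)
The paper does not actually prove Theorem \ref{th:basefield}: it is quoted as a known result, with the reduction attributed to Piff~\cite{Piff} and the completion to Lindstr\"{o}m~\cite{Lind89}, so there is no internal proof to compare against. Measured against that, your proposal is a faithful reconstruction of the known argument. Your stages 1 and 2 --- descending to a subfield $\K_0$ finitely generated over the prime field $k_0$ by collecting the coefficients of finitely many witness polynomials, and the invariance $\deg_{tr}(L(S)/L)=\deg_{tr}(\overline{L}(S)/\overline{L})$ under algebraic base change --- are correct and complete, and together with the induction on a transcendence basis of $\K_0/k_0$ they recover exactly Piff's reduction to the single statement the paper quotes: algebraic over $\F(t)$ implies algebraic over $\F$. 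The one place where you do not give a proof is precisely that statement: your specialization sketch handles the easy inequality (dependence survives specializing $t$ to all but finitely many $a\in F$), but, as you say yourself, it defers the hard inequality --- that algebraic independence of $\pi(B_X)$ survives a single specialization of $t$ and a single choice of component, made uniformly over all $X\subseteq E$ --- to \cite{Lind89}. That is the entire mathematical content of the theorem beyond routine reductions, and it is genuinely delicate in positive characteristic, where inseparability blocks the derivation-based shortcut you correctly note is available in characteristic $0$ via Ingleton~\cite{Ing71b}. So: no errors, the reductions are right and match the structure the paper alludes to, but the proposal is not self-contained --- it establishes the theorem modulo the same citation the paper itself relies on.
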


So from now on, we only consider algebraic representations over prime fields.\remove{may assume that if $\pi\colon E\to \F$ is an algebraic representation then the transcendence degree is over the prime field of $\F$.}

\subsection{Fixed-Point Free Representations}\label{subsec:rep}
In this section we quickly recall some basic definitions in representations theory, including the somewhat less common definition of fixed-point free representations.

\begin{definition}\label{def:representations}
Let $G$ be a finite group and $\F$ a field. 
A {\em representation} of $G$ is a group homomorphism $\rho\colon G\rightarrow \GL_k(\F)$. \remove{ (the group of $k\times k$ invertible matrices). }
The {\em dimension} or {\em degree} of the representation is $k$. 
A representation is called {\em faithful} if it is injective. 
A representation $\rho\colon G\rightarrow \GL_k(\F)$ is {\em fixed-point free} if for every $e\neq g\in G$ the field element 1 is not an eigenvalue of $\rho (g)$, i.e., $\rho(g)\cdot v\neq v$ for every $g\neq e$,  $v\neq 0$. 
A {\em fixed-point free group} is one which admits a fixed-point free representation.
\end{definition}
Fixed-point free representations are obviously faithful, but the converse is false in general. Note that not every representation of a fixed-point free group $G$ is fixed-point free, even if the representation is faithful. For example, cyclic groups are fixed-point free but also have faithful non-fixed-point free representations:
\begin{example}\label{ex:cyclicrep}
Let $G=C_m$ be the cyclic group with $m$ elements. Denote $\zeta=e^{\frac{2\pi i}{m}}$. If $\rho\colon G\to \GL_2(\C)$ is defined by $\rho(k)=\left(\begin{matrix} \zeta^k&0\\0&1\end{matrix}\right)$ then $\rho$ is faithful\remove{(because $i\neq k\Rightarrow \rho(i)\neq \rho(k)$)} but not fixed-point free because $\left(\begin{matrix}
\zeta^k&0\\0&1\end{matrix}\right)\left(\begin{matrix}
0\\1\end{matrix}\right)=\left(\begin{matrix}
0\\1\end{matrix}\right)$ \remove{(and this should only happen for $k=0$)}. However, if we define $\rho(k)=\left(\begin{matrix} \zeta^k&0\\0&\zeta^k\end{matrix}\right)$ then $\rho$ is fixed-point free.\remove{, because if $k\neq 0$ then 1 is not an eigenvalue of $\left(\begin{matrix} \zeta^k&0\\0&\zeta^k\end{matrix}\right)$.} The group $\C_m$ also has a fixed-point free representation of dimension 1, given by $\rho(k)=(\zeta^k)$.
\end{example}
\begin{observation}\label{ob:subtraction}
If $\rho\colon G\to \GL_k(\F)$ is a fixed-point free representation, and $g,h\in G$ are distinct, then $\rho(g)-\rho(h)$ is invertible.
\end{observation}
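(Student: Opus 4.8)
The plan is to reduce the invertibility of the difference $\rho(g)-\rho(h)$ to the defining property of fixed-point freeness applied to the single group element $h^{-1}g$. The key algebraic manipulation is to factor out one of the two matrices using that $\rho$ is a homomorphism: write
$$\rho(g)-\rho(h) = \rho(h)\bigl(\rho(h^{-1}g) - I\bigr),$$
using $\rho(h)^{-1}\rho(g) = \rho(h^{-1})\rho(g) = \rho(h^{-1}g)$ and that $\rho(h^{-1}) = \rho(h)^{-1}$ (both standard facts for group homomorphisms into $\GL_k(\F)$).

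Now I would argue that each of the two factors on the right is invertible. The factor $\rho(h)$ lies in $\GL_k(\F)$ by hypothesis, so it is invertible. For the factor $\rho(h^{-1}g) - I$: since $g$ and $h$ are distinct, $h^{-1}g \neq e$, so by the definition of a fixed-point free representation the field element $1$ is not an eigenvalue of $\rho(h^{-1}g)$. Equivalently, $\det(\rho(h^{-1}g) - I) \neq 0$, i.e., $\rho(h^{-1}g) - I$ is invertible. A product of two invertible matrices is invertible, so $\rho(g) - \rho(h)$ is invertible, as claimed.

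There is essentially no obstacle here; the only thing to be mildly careful about is the direction of the factorization (whether one factors $\rho(h)$ out on the left or $\rho(g)$ out on the right, and correspondingly whether the relevant group element is $h^{-1}g$ or $g^{-1}h$) — but since $g^{-1}h \neq e$ exactly when $h^{-1}g \neq e$, either choice works and the argument is symmetric. The statement is really just an immediate reformulation of the fixed-point free condition, packaged in the form most convenient for later use (e.g., when building generic rank computations for Dowling-type geometries).
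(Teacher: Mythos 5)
Your proof is correct and is essentially the same argument as the paper's: both reduce the invertibility of $\rho(g)-\rho(h)$ to the fact that $1$ is not an eigenvalue of $\rho$ applied to the nonidentity element $h^{-1}g$ (resp.\ $g^{-1}h$), the paper phrasing this contrapositively via a kernel vector and you phrasing it as a factorization into invertible matrices. No gaps.
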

\begin{proof}
If $\rho(g)-\rho(h)$ is not invertible, then there exists a vector $\vec{v}\neq 0$ such that $(\rho(g)-\rho(h))\vec{v}=0$, implying $\rho(g)\vec{v}=\rho(h)\vec{v}$. So $\vec{v}=\rho(g)^{-1}\rho(h)\vec{v}=\rho(g^{-1}h)\vec{v}$. Thus, $\vec{v}$ is an eigenvector with eigenvalue $1$ of $\rho(g^{-1}h)$, a contradiction. 
\end{proof}

\subsection{Rank-3 Dowling Group Geometries}\label{app:dowling}
The Dowling group geometries were introduced by Dowling~\cite{Dowlinga,Dowlingb}. The rank-$r$ Dowling geometry of a group $G$ is denoted by $Q_r(G)$. The general construction of $Q_r(G)$ can be found in \cite{Dowlingb} or \cite[Chapter 6.10]{Oxl11}. We use a slightly different construction of the rank-3 Dowling group geometries, following \cite{Aner}. The resulting matroid in the usual construction is isomorphic to this construction by a relabelling of the elements.

Let $G=\{e=g_{1},g_{2},...,g_{n}\}$ be a finite group. The rank-3 Dowling geometry of $G$, denoted $Q_{3}(G)$, is a matroid of rank 3 on the set $E=\{p_1,p_2,p_3,g^{(1)}_1,...,g^{(1)}_n,g^{(2)}_1,..,g^{(2)}_n,g^{(3)}_1,...,g^{(3)}_n\}$. So there are 3 ground set elements $p_1,p_2,p_3$, called the joints, which are not related to the group, and for every element $g\in G$, there are 3 elements in the ground set of the matroid $g^{(1)},g^{(2)},g^{(3)}\in E$.

\begin{figure}[h]
$$
\setlength{\unitlength}{3cm}
\begin{picture}(1.5,1)
\put(0,0){\line(1,1){1}}
\put(2,0){\line(-1,1){1}}
\put(0,0){\line(1,0){2}}

\put(0,0){\circle*{.05}}
\put(-0.15,-0.01){$p_1$}
\put(1,1){\circle*{.05}}
\put(1.0,1.07){$p_2$}
\put(2,0){\circle*{.05}}
\put(2.05,0){$p_3$}

\put(0.1,0.1){\circle*{.05}}
\put(-0.05,0.1){$g^{(1)}_1$}
\put(0.2,0.2){\circle*{.05}}
\put(0.25,0.15){$g^{(1)}_2$}
\put(0.3,0.3){\circle*{.05}}
\put(0.15,0.3){$g^{(1)}_3$}

\put(0.45,0.45){\circle*{.03}}
\put(0.6,0.6){\circle*{.03}}
\put(0.75,0.75){\circle*{.03}}

\put(0.9,0.9){\circle*{.05}}
\put(0.75,0.92){$g^{(1)}_n$}

\put(1.1,0.9){\circle*{.05}}
\put(1.14,0.9){$g^{(2)}_1$}
\put(1.2,0.8){\circle*{.05}}
\put(1.24,0.8){$g^{(2)}_2$}
\put(1.3,0.7){\circle*{.05}}
\put(1.34,0.7){$g^{(2)}_3$}

\put(1.45,0.55){\circle*{.03}}
\put(1.6,0.4){\circle*{.03}}
\put(1.75,0.25){\circle*{.03}}

\put(1.9,0.1){\circle*{.05}}
\put(1.94,0.1){$g^{(2)}_n$}

\put(1.8,0){\circle*{.05}}
\put(1.7,-0.15){$g^{(3)}_1$}
\put(1.6,0){\circle*{.05}}
\put(1.5,-0.15){$g^{(3)}_2$}
\put(1.4,0){\circle*{.05}}
\put(1.3,-0.15){$g^{(3)}_3$}

\put(1.1,0){\circle*{.03}}
\put(0.8,0){\circle*{.03}}
\put(0.5,0){\circle*{.03}}

\put(0.2,0){\circle*{.05}}
\put(0.2,-0.15){$g^{(3)}_n$}

\end{picture}$$
\caption{A geometric representation of the Rank-3 Dowling geometry with the lines corresponding to the sets $\set{g^{(1)}_i,g^{(2)}_j,g^{(3)}_{\ell}}$ such that $g_j\cdot g_i\cdot g_{\ell}=e$ missing.\label{fig:Dowling}}
\end{figure}
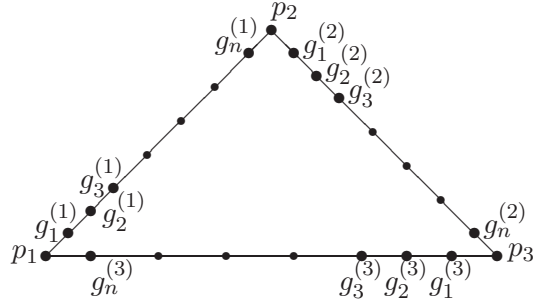

Figure \ref{fig:Dowling} is a geometric representation of $Q_{3}(G)$, with additional lines that go through the points $g^{(1)}_i$, $g^{(2)}_j$, $g^{(3)}_{\ell}$  if and only if  $g_j\cdot g_i\cdot g_{\ell}=e$ (e.g., there is always a line that goes through $g^{(1)}_1,g^{(2)}_1,g^{(3)}_1$ since $g_1=e$ and $e\cdot e\cdot e=e$).

Notice that in the geometric representation of $Q_3(G)$, every point of $Q_3(G)$ lies on one of the following lines: $\set{p_1,p_2}$,$\set{p_2,p_3}$, $\set{p_1,p_3}$. We refer to these lines as the {\em edges} of $Q_3(G)$.

So the set of bases of $Q_{3}(G)$ is every set of 3 elements except
\begin{enumerate}
\item A set of 3 points on the same edge.
\item The sets $\{g^{(1)}_i,g^{(2)}_j,g^{(3)}_{\ell}\}$ when $g_{j}\cdot g_{i}\cdot g_{\ell} = e$.
\end{enumerate}

Figures \ref{fig:DTM}$(a)$ and \ref{fig:DTM}$(b)$ represent the Dowling Geometries of the trivial group and of the cyclic group $C_2$ respectively.

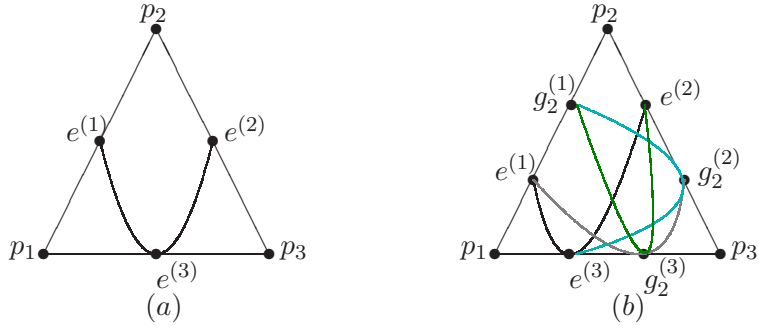
\begin{figure}[h]
$$
\setlength{\unitlength}{3cm}
\begin{picture}(2,1.1) 
\put(0,0.1){\line(1,2){.5}}
\put(1,0.1){\line(-1,2){.5}}
\put(0,0.1){\line(1,0){1}}
\put(0,0.1){\circle*{.05}}
\put(-0.15,0.1){$p_1$}
\put(.5,1.1){\circle*{.05}}
\put(.43,1.15){$p_2$}
\put(1,0.1){\circle*{.05}}
\put(1.05,0.1){$p_3$}
\put(.25,.6){\circle*{.05}}
\put(.1,.6){$e^{(1)}$}
\put(.5,0.1){\circle*{.05}}
\put(.5,-0.05){$e^{(3)}$}
\put(.75,0.6){\circle*{.05}}
\put(.8,0.6){$e^{(2)}$}
\qbezier(0.25,0.6)(.5,-.4)(.75,.6)
\put(.45,-.17){$(a)$}
\end{picture}
\begin{picture}(1,1.1)
\put(0,0.1){\line(1,2){.5}}
\put(1,0.1){\line(-1,2){.5}}
\put(0,0.1){\line(1,0){1}}
\put(0,0.1){\circle*{.05}}
\put(-0.15,0.1){$p_1$}
\put(.5,1.1){\circle*{.05}}
\put(.43,1.15){$p_2$}
\put(1,0.1){\circle*{.05}}
\put(1.05,0.1){$p_3$}
\put(.17,.43){\circle*{.05}}
\put(.0,.43){$e^{(1)}$}
\put(.34,.76){\circle*{.05}}
\put(.17,.76){$g^{(1)}_2$}
\put(.33,0.1){\circle*{.05}}
\put(.33,-0.05){$e^{(3)}$}
\put(.66,0.1){\circle*{.05}}
\put(.66,-0.05){$g^{(3)}_2$}
\put(.67,0.76){\circle*{.05}}
\put(.73,0.76){$e^{(2)}$}
\put(.84,0.43){\circle*{.05}}
\put(.90,0.43){$g^{(2)}_2$}
\qbezier(.17,.43)(.33,-.37)(.67,0.76)
{\color{mygray}\qbezier(.17,.43)(0.77,-0.23)(.84,0.43)}
{\color{myblue}\qbezier(.34,.76)(1.27,0.37)(.33,0.1)}
{\color{myred}\qbezier(.3,.76)(0.75,-0.55)(.6,0.76)}
\put(.45,-.17){$(b)$}
\end{picture}$$
\caption{Geometric Representation of the matroids $Q_3(\{e\})$ and $Q_3(C_2)$.\label{fig:DTM}}
\end{figure}

\subsection{Derivations and their Induced Matroid}\label{sec:derivations}
In this section we first briefly review some properties of derivations. We then describe how derivations are used to study algebraic matroids. Specifically, we describe the linear matroid induced by the derivations. In this paper, we will need only $\K$-linear derivations of a field extension $\F/\K$.

\subsubsection{Derivations}\label{subsec:der}
Derivations are linear maps that satisfy the Leibniz rule.
\begin{definition}
Let $\K \subset \F$ be a field extension. A {\em derivation} of $\F$ over $\K$ is a mapping $D\colon\F\to \F$ such that 
\begin{enumerate}
\item $\forall y,z\in \F,D(y+z)=D(y)+D(z)$,
\item $\forall y,z\in \F,D(yz)=zD(y)+yD(z)$,
\item $\forall a\in \K, D(a)=0$.
\end{enumerate}
\end{definition}

We will need the following Lemma and basic properties of derivations, which can be found, for example, in\cite[Chapter VIII, Section 5]{Lang}.
\begin{lemma}\label{cor:der2}
If $\F\subset \F(y)$ is a field extension and $\bar{D}\colon\F(y)\to \F(y)$ is an extension of $D$ to $\F(y)$ (i.e., $\forall a\in\F,\bar{D}(a)=D(a)$), then for $f\in \F[Y]$ that vanishes on $y$ we have $\bar{D}(y)\cdot f'(y)+f^D(y)=0$, where  $f'$ is the formal derivative of $f$.
\end{lemma}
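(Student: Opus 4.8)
The plan is to prove Lemma~\ref{cor:der2} by a direct computation, applying the extended derivation $\bar D$ to the relation $f(y)=0$ and using the fact that $f$ is a polynomial with coefficients in $\F$. First I would write $f = \sum_{i=0}^{n} a_i Y^i$ with $a_i \in \F$, so that $f^D$ denotes the polynomial $\sum_i D(a_i) Y^i$ obtained by applying $D$ to the coefficients, and $f'$ denotes $\sum_i i\, a_i Y^{i-1}$ as usual. Since $f(y)=0$ in $\F(y)$, applying $\bar D$ gives $\bar D(f(y)) = 0$.

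The key step is to expand $\bar D\!\left(\sum_i a_i y^i\right)$ using additivity (property 1) and the Leibniz rule (property 2). Additivity lets me pull the derivation inside the sum: $\bar D(f(y)) = \sum_i \bar D(a_i y^i)$. For each term, the product rule gives $\bar D(a_i y^i) = \bar D(a_i)\, y^i + a_i\, \bar D(y^i) = D(a_i)\, y^i + a_i\, i\, y^{i-1}\,\bar D(y)$, where I use that $\bar D$ extends $D$ so $\bar D(a_i) = D(a_i)$, and where $\bar D(y^i) = i\, y^{i-1}\bar D(y)$ follows by an easy induction on $i$ from the Leibniz rule. Summing over $i$ and collecting terms, the part not involving $\bar D(y)$ is $\sum_i D(a_i)\, y^i = f^D(y)$, and the part involving $\bar D(y)$ is $\bar D(y)\sum_i i\, a_i\, y^{i-1} = \bar D(y)\, f'(y)$. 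Hence $0 = \bar D(f(y)) = \bar D(y)\, f'(y) + f^D(y)$, which is exactly the claimed identity.

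There is essentially no serious obstacle here; the only points requiring a word of care are the inductive verification that $\bar D(y^i) = i\, y^{i-1}\bar D(y)$ (immediate from property 2) and making sure the notation $f^D$ and $f'$ is consistent with the intended meaning. I would state these conventions explicitly at the start so the one-line computation reads cleanly. (Indeed, since this is a standard fact cited from \cite[Chapter VIII, Section 5]{Lang}, one could alternatively just cite it, but the self-contained computation is short enough to include.)
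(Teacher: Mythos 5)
Your computation is correct and is the standard argument: applying $\bar D$ to $f(y)=0$, using additivity and the Leibniz rule termwise, and collecting the $\bar D(y)$-terms gives exactly $\bar D(y)f'(y)+f^D(y)=0$. The paper itself offers no proof of this lemma, citing \cite[Chapter VIII, Section 5]{Lang} instead, and your self-contained derivation matches the one found there, so there is nothing to flag.
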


The set of all derivations of $\F$ over $\K$, with the natural definition of addition and scalar multiplication forms a vector space denoted $\Der_{\K}(\F)$. If $\F=\K(x_1,...,x_n)$, such that $\set{x_1,...,x_n}$ are algebraically independent over $\K$, then the derivations $\set{D_i:=\frac{\partial}{\partial x_i}}_{i=1}^n$, i.e., $D_i(x_j)=\delta_{i,j}$ (the Kronecker delta), are a basis of $\Der_{\K}(\F)$.

If $\F\subseteq \E$ is a finite separable extension, then each $D\in\Der_{\K}(\F) $ extends uniquely to a derivation $\bar{D}\colon\E\to\E$, and the derivations $\set{\bar{D_i}}_{i=1}^n$ form a basis of $\Der_{\K}(\E)$.

\subsubsection{The Matroid Induced by the Derivations}\label{subsec:induced}
Let $M=(E=\set{1,\dots,N},r)$ be a matroid with a basis $\set{1,\dots,n}$. Suppose $\pi\colon E\to \E=\F_p(\pi(E))$ is an algebraic representation of $M$ over $\F_p$. Then $\set{\pi(1),\dots,\pi(n)}$ are algebraically independent over $\F_p$, and $\F_p(\pi(1),\dots,\pi(n))\subseteq\E$ is an algebraic extension. If we suppose further that this extension is separable, then the derivations $\bar{D}_i$ are uniquely defined for $1\leq i\leq n$, so this induces a linear representation of a (possibly different) matroid $\bar{M}=(E,\bar{r})$, with the same ground set, in the following way: Denote the gradient of an element $y\in \E$ by $\nabla(y):=(\bar{D_1}(y),...,\bar{D_n}(y))\in \E^n$. Then the matrix with the $i^{th}$ column being $\nabla(\pi(i))$ is a linear representation over $\E$ of $\bar{M}$.

Every dependent set in $M$ is also dependent in $\bar{M}$ (see for example~\cite{Ing71b}). As Ingleton~\cite{Ing71b} showed, in characteristic 0 the converse is also true, so $M=\bar{M}$. Therefore, in characteristic 0 every algebraic matroid is linearly representable. However, if the characteristic is positive then the equality $M=\bar{M}$ may fail, as the following example shows:
\begin{example}\label{ex:alg2lin}
Let $\K=\F_3$ and $\E=\F=\K(x_1,x_2,x_3)$ with $x_1,x_2,x_3$ independent variables. On the set $E=\set{1,\dots,6}$ define the mapping $\pi\colon E\to \E$ by:
\begin{center}
\begin{tabular}{c||c|c|c|c|c|c}
$e$&1&2&3&4&5&6\\ \hline \hline
$\pi(e)$ & $x_1$ & $(x_2)^3$ & $x_3$ & $x_1+(x_2)^3$ & $x_1+x_3$ & $x_1+(x_2)^3+2x_3$\\ 
\end{tabular}\,\,\,\,\,{\raisebox{-2ex}{.}}
\end{center}

For $X\subseteq E$, set $r(X):=\deg_{tr}(\K(\pi(X))/\K)$. This defines an algebraic matroid $M$ whose geometric representation is Figure \ref{fig:der} ($a$). The corresponding gradients are 
\begin{center}
\begin{tabular}{c||c|c|c|c|c|c}
$e$&1&2&3&4&5&6\\ \hline \hline
$\nabla(\pi(e))$ & $(1,0,0)$ & $(0,0,0)$ & $(0,0,1)$ & $(1,0,0)$ & $(1,0,1)$ & $(1,0,2)$\\ 
\end{tabular}\,\,\,\,\,{\raisebox{-2ex}{,}}
\end{center}
which induce the linear matroid $\bar{M}$. The corresponding geometric representation is Figure \ref{fig:der} ($b$). As one can see, there are dependent sets in $\bar{M}$ which were independent in $M$, e.g. $\set{2},\set{1,4},\set{1,3,6}$.
\end{example}
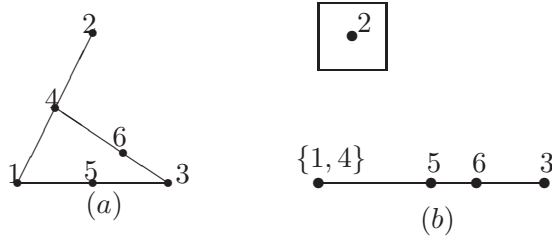
\begin{figure}
$$
\setlength{\unitlength}{2cm}
\begin{picture}(2,1) 
\put(0,0){\line(1,2){.5}}
\put(0,0){\line(1,0){1}}
\put(1,0){\line(-3,2){.75}}
\put(0,0){\circle*{.05}}
\put(-0.07,0){$1$}
\put(.5,1){\circle*{.05}}
\put(.43,1){$2$}
\put(1,0){\circle*{.05}}
\put(1.05,0){$3$}
\put(.25,.5){\circle*{.05}}
\put(.18,.5){$4$}
\put(.5,0){\circle*{.05}}
\put(.44,0.02){$5$}
\put(.7,.2){\circle*{.05}}
\put(.63,.25){$6$}
\put(.45,-.2){$(a)$}
\end{picture}
\setlength{\unitlength}{3cm}
\begin{picture}(1,1) 
\put(0,0){\line(1,0){1}}
\put(0,0){\circle*{.05}}
\put(-0.1,0.07){$\{1,4\}$}
\put(1,0){\circle*{.05}}
\put(0.98,0.05){$3$}
\put(.5,0){\circle*{.05}}
\put(.48,0.05){$5$}
\put(.7,0){\circle*{.05}}
\put(.68,0.05){$6$}
\put(.45,-.2){$(b)$}

\put(0,0.5){\line(1,0){0.3}}
\put(0,0.5){\line(0,1){0.3}}
\put(0.3,0.8){\line(-1,0){0.3}}
\put(0.3,0.8){\line(0,-1){0.3}}
\put(.15,0.65){\circle*{.05}}
\put(.17,0.67){$2$}

\end{picture}
$$
\caption{\label{fig:der}An algebraic matroid and its induced derivations matroid.}
\end{figure}

Given an algebraic representation $\pi$ of $M$ over $\F_p$, one can construct another algebraic representation $\pi'$ over $\F_p$ in the following way:
\begin{lemma}\label{lem:basicreplace}
Let $M$ be an algebraic matroid on ground set $E=\set{1,\dots,n}$ of rank $r$, $\pi\colon E\to\E$ an algebraic representation of $M$ over $\F_p\subseteq\E$. For a function $m\colon E\to\Z$, we set $\pi'(i):=(\pi(i))^{p^{m(i)}}\in \bar{\E}$. Then $\pi' \colon E\to\F_p(\pi'(E))$ is an algebraic representation of $M$.
\end{lemma}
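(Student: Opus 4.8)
The plan is to reduce the statement to the single fact that, for each $X\se E$, the field $\F_p(\pi'(X))$ has the same transcendence degree over $\F_p$ as $\F_p(\pi(X))$. Since $\pi$ is an algebraic representation of $M$ we have $\deg_{tr}(\F_p(\pi(X))/\F_p)=r(X)$ for every $X$, so once this is established it follows that $\pi'$ computes exactly the rank function $r$ of $M$, and since $\F_p\se\F_p(\pi'(E))$ by construction, $\pi'$ is an algebraic representation of $M$ over $\F_p$.

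First I would check that $\pi'$ is well defined: in characteristic $p$ the polynomial $Y^{p^{\ell}}-a$ factors as $(Y-b)^{p^{\ell}}$ for the unique element $b$ with $b^{p^{\ell}}=a$, so $a^{1/p^{\ell}}$ is unambiguously defined in any algebraic closure, and hence so is $\pi(i)^{p^{m(i)}}\in\bar{\E}$ when $m(i)<0$; for $m(i)\ge 0$ it is just an honest power. The key observation is then that, for every $i\in E$, the elements $\pi(i)$ and $\pi'(i)=\pi(i)^{p^{m(i)}}$ are algebraic over each other. Indeed, if $m(i)\ge 0$ then $\pi'(i)\in\F_p(\pi(i))$ while $\pi(i)$ is a root of $Y^{p^{m(i)}}-\pi'(i)$; if $m(i)<0$ then $\pi(i)\in\F_p(\pi'(i))$ while $\pi'(i)$ is a root of $Y^{p^{-m(i)}}-\pi(i)$. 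In either case the relation between $\pi(i)$ and $\pi'(i)$ is purely inseparable, hence algebraic in both directions.

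Now fix $X\se E$ and let $L:=\F_p(\pi(X)\cup\pi'(X))$. By the previous paragraph every generator $\pi'(i)$ with $i\in X$ is algebraic over $\F_p(\pi(X))$, so the extension $L/\F_p(\pi(X))$ is algebraic; symmetrically, every $\pi(i)$ with $i\in X$ is algebraic over $\F_p(\pi'(X))$, so $L/\F_p(\pi'(X))$ is algebraic as well. Using additivity of transcendence degree in towers together with the fact that algebraic extensions contribute $0$, we obtain $$\deg_{tr}\big(\F_p(\pi'(X))/\F_p\big)=\deg_{tr}(L/\F_p)=\deg_{tr}\big(\F_p(\pi(X))/\F_p\big)=r(X),$$ which is exactly the claim. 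I do not anticipate a genuine obstacle here; the only mild points of care are the well-definedness of negative Frobenius powers inside $\bar{\E}$ and making sure the mutual-algebraicity argument is run for all $i\in X$ simultaneously, regardless of the signs of the various $m(i)$. The conceptual content is simply that passing between $\pi(i)$ and its $p$-power (or $p$-power root) only ever enlarges a field by a purely inseparable, and in particular algebraic, extension, which transcendence degree does not detect.
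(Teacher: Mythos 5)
Your proof is correct: the mutual (purely inseparable, hence algebraic) dependence of $\pi(i)$ and $\pi'(i)$, combined with additivity of transcendence degree in towers, gives $\deg_{tr}(\F_p(\pi'(X))/\F_p)=\deg_{tr}(\F_p(\pi(X))/\F_p)=r(X)$ for every $X\se E$, which is exactly what is needed. The paper omits the proof as ``simple,'' and your argument is precisely the standard one it intends, including the correct handling of negative exponents via uniqueness of $p$-th roots in $\bar{\E}$.
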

The proof is simple and therefore omitted. If $m$ is not the zero function, then clearly $\pi'\neq\pi$. It is important to notice that the matroid induced by the derivations depends on the specific representation.
\begin{example}\label{ex:alg2lin2}
Let $\K=\F_3$, $\E_1=\E_2=\F=K(x_1,x_2)$, and $\E_3=\K(x_1,x_2,(x_1+x_2)^{\frac{1}{3}})$. Notice that $\F\subset \E_3$ is not a separable extension, since the minimal polynomial of $(x_1+x_2)^{\frac{1}{3}}$ in $\F[X]$ is $f=X^3-(x_1+x_2)$, so $f'=0$. On the set $E=\set{1,2,3}$ define 3 algebraic representations, $\pi_i\colon E\to \E_i,1\leq i\leq 3$, as follows:
\begin{center}
\begin{tabular}{c||c|c|c}
$e$&1&2&3\\ \hline \hline
$\pi_1(e)$ & $x_1$ & $x_2$ & $x_1+x_2$\\ \hline
$\pi_2(e)$ & $x_1$ & $x_2$ & $(x_1+x_2)^3$\\ \hline
$\pi_3(e)$ & $x_1$ & $x_2$ & $(x_1+x_2)^{\frac{1}{3}}$\\ 
\end{tabular}\,\,\,\,\,{\raisebox{-5ex}{.}}
\end{center}
From Lemma \ref{lem:basicreplace}, the mappings $\pi_1$, $\pi_2$, and $\pi_3$ are all algebraic representations of the same matroid, namely $U_{2,3}$, whose geometric representation is Figure \ref{fig:der2} ($a$). By calculation, as in Example \ref{ex:alg2lin}, we can see that the geometric representation of the derivations matroid for $\pi_1$ is Figure \ref{fig:der2} ($a$) ,i.e., it is equal to the original matroid. For $\pi_2$, the geometric representation is Figure \ref{fig:der2} ($b$). 

Notice that in the derivations matroid of $\pi_2$, the singleton $\set{3}$ is dependent, because the gradient of $\pi_2(3)$ is the zero vector.

For $\pi_3$, the derivations matroid makes no sense because $\E_3/\F$ is not a separable extension. Indeed, if $D_1$ was extendable to $\bar{D}_1\in \Der_\K(\E_3)$, then $1=\bar{D}_1(x_1+x_2)=\bar{D}_1(((x_1+x_2)^{\frac{1}{3}})^3)=3((x_1+x_2)^{\frac{1}{3}})^2\bar{D}_1((x_1+x_2)^{\frac{1}{3}})=0$ would be a contradiction.

In the proof of our main theorem we will use Lemma \ref{lem:basicreplace} to avoid representations such as $\pi_3$ (inseparable extension) and $\pi_2$ (with dependent singletons in the induced matroid).
\end{example}

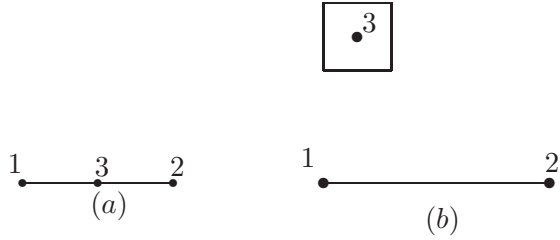
\begin{figure}
$$
\setlength{\unitlength}{2cm}
\begin{picture}(2,1) 
\put(0,0){\line(1,0){1}}
\put(0,0){\circle*{.05}}
\put(-0.1,0.07){$1$}
\put(1,0){\circle*{.05}}
\put(0.98,0.05){$2$}
\put(.5,0){\circle*{.05}}
\put(.48,0.05){$3$}
\put(.45,-.2){$(a)$}
\end{picture}
\setlength{\unitlength}{3cm}
\begin{picture}(1,1) 
\put(0,0){\line(1,0){1}}
\put(0,0){\circle*{.05}}
\put(-0.1,0.07){$1$}
\put(1,0){\circle*{.05}}
\put(0.98,0.05){$2$}
\put(.45,-.2){$(b)$}

\put(0,0.5){\line(1,0){0.3}}
\put(0,0.5){\line(0,1){0.3}}
\put(0.3,0.8){\line(-1,0){0.3}}
\put(0.3,0.8){\line(0,-1){0.3}}
\put(.15,0.65){\circle*{.05}}
\put(.17,0.67){$3$}
\end{picture}
$$
\caption{\label{fig:der2}The algebraic matroid $U_{2,3}$ and one of its induced derivations matroids.}
\end{figure}

We note that for the matroids in Examples \ref{ex:alg2lin} and \ref{ex:alg2lin2}, one can clearly choose algebraic representations in which the matroids induced by the derivations coincide with the original matroid. However, this is not always the case for an algebraic representation. Specifically, for a derivations matroid $\bar{M}$ of non-linear algebraic matroid $M$, we necessarily have $\bar{M}\neq M$. This will also be the case in the
proof of our main theorem, where the original matroid will contain the non-Fano matroid as a minor, while the derivations matroid will contain the Fano matroid as a minor.

\section{Some Known Results}\label{sec:known}
In this section we summarize some known results about \remove{the representability of }2 families of matroids: the Reid geometries and the Dowling geometries. In Section \ref{sec:main} we construct a non-algebraic matroid containing the Reid geometry $R_7$ and the rank-3 Dowling geometry of the quaternion group\remove{, $Q_3(\mathsf{Q}_8)$,} as minors.

\paragraph{The Reid Geometries.}
For a prime $p$, let $R_p$ be the matroid obtained from the following $k$-linear representation over a field $\F$, with $\Char(\F)=p$:
$$\mathbf{B}=\left[\begin{matrix}
I_k&0&0&I_k&I_k&0&0&\dots&0\\
0&I_k&0&I_k&0&I_k&I_k&\dots&I_k\\
0&0&I_k&0&I_k&I_k&2\cdot I_k&\dots &(p-1)\cdot I_k
\end{matrix}
\right.
\left.\begin{matrix}
&I_k&I_k&\dots&I_k\\
&I_k&I_k&\dots&I_k\\
&I_k&2\cdot I_k&\dots &(p-1)\cdot I_k
\end{matrix}
\right]{\raisebox{-3ex}{.}}
$$
Note that the rank function is integer valued and does not depend on $k$, so this uniquely defines a matroid for every $p$. For example, for $p=2$ this is a $k$-linear representation of the Fano matroid. Clearly, by construction, $R_p$ is $k$-linearly representable over any field of characteristic $p$ for every $k$. The converse is implied by the result of Reid\footnote{Reid proved the result for linear representations, but almost the same proof works for $k$-linear representations as well.}.
\begin{ttheorem}\label{th:M7lin}
For every $k\in \N$, the matroid $R_p$ is $k$-linearly representable over $\F$ if and only if $\Char(\F)=p$.
\end{ttheorem}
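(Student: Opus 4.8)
The plan is to prove the two implications separately, the forward one being immediate and the converse being Reid's classical argument transported to the block ($k$-linear) setting. For ``$\Char(\F)=p\Rightarrow R_p$ is $k$-linearly representable over $\F$'' there is nothing to do: every entry of $\mathbf{B}$ lies in the prime field $\F_p\subseteq\F$, so $\mathbf{B}$ itself is a $k$-linear representation of $R_p$ over $\F$, exactly as noted just before the theorem. So I would concentrate on the converse. Suppose we are handed a $k$-linear representation of $R_p$ over $\F$ and write $U_x\subseteq\F^{3k}$ for the $k$-dimensional subspace assigned to an element $x$. Label the elements after the columns of $\mathbf{B}$: the joints $p_1,p_2,p_3$; the points $a$ and $b$ (columns $4$ and $5$); the points $c_1,\dots,c_{p-1}$ on the edge through $p_2,p_3$ (the blocks $(0,I_k,jI_k)$, $1\le j\le p-1$); and the interior points $d_1,\dots,d_{p-1}$ (the blocks $(I_k,I_k,jI_k)$). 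From the definition of $R_p$ one reads off the rank-$2$ flats I will invoke: $\{p_1,p_2,a\}$, $\{p_1,p_3,b\}$, $\{p_2,p_3,c_1,\dots,c_{p-1}\}$, $\{a,p_3,d_1,\dots,d_{p-1}\}$, the triples $\{p_1,c_j,d_j\}$ and $\{b,c_{j-1},d_j\}$ for $1\le j\le p-1$ (reading $c_0:=p_2$), and the decisive triangle $\{a,b,c_{p-1}\}$.

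First I would normalize. Because $\{p_1,p_2,p_3\}$ is a basis of $R_p$, we have $U_{p_1}\oplus U_{p_2}\oplus U_{p_3}=\F^{3k}$, so after a change of basis of $\F^{3k}$ (left multiplication of the representation by an invertible matrix) I may take $U_{p_i}$ to be the $i$-th coordinate block. The freedom that remains is block-diagonal change of basis by a triple $(P_1,P_2,P_3)\in\GL_k(\F)^3$. The flat $\{p_1,p_2,a\}$, together with the independence of $\{p_1,a\}$ and of $\{p_2,a\}$, forces $U_a$ to be the graph of a linear isomorphism from the first block to the second; using $P_2$ I normalize it to $U_a=\{(v,v,0):v\in\F^k\}$. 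Symmetrically $U_b$ is the graph of an isomorphism from the first block to the third, and using $P_3$ I normalize $U_b=\{(v,0,v):v\in\F^k\}$. This exhausts the residual freedom.

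Next I would run the induction along the chain of collinearities. Just as above, $U_{c_j}=\{(0,v,M_jv):v\in\F^k\}$ for some $M_j\in\GL_k(\F)$ (since $c_j$ lies on $\overline{p_2p_3}$ and is independent of $p_2$ and of $p_3$), with $M_0=0$ coming from $c_0=p_2$; and $U_{d_j}=\{(v,v,N_jv):v\in\F^k\}$ for some $N_j\in\GL_k(\F)$ (since $d_j$ lies on $\overline{ap_3}$ and is independent of $a$ and of $p_3$). The flat $\{p_1,c_j,d_j\}$ forces $U_{c_j}\subseteq U_{p_1}+U_{d_j}$, which yields $M_j=N_j$; the flat $\{b,c_{j-1},d_j\}$ forces $U_{d_j}\subseteq U_b+U_{c_{j-1}}$, which yields $N_j=I_k+M_{j-1}$. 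Hence $M_j=I_k+M_{j-1}$ with $M_0=0$, so $M_j=jI_k$ for all $j$, and in particular $U_{c_{p-1}}=\{(0,v,(p-1)v):v\in\F^k\}$. Finally I would apply the triangle: $\{a,b,c_{p-1}\}$ is a rank-$2$ flat, so $\dim(U_a+U_b+U_{c_{p-1}})=2k$; but $U_a+U_b=\{(y+z,y,z):y,z\in\F^k\}$ already has dimension $2k$, so $U_{c_{p-1}}\subseteq U_a+U_b$, which forces $v+(p-1)v=pv=0$ for every $v\in\F^k$, i.e.\ $p=0$ in $\F$. As $p$ is prime this means $\Char(\F)=p$, completing the converse.

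The argument is essentially Reid's, and none of the individual steps is difficult. The one place that calls for care is the first step: in the block setting one must genuinely verify that each subspace in question (namely $U_a$, $U_b$, the $U_{c_j}$, the $U_{d_j}$) is the graph of a linear \emph{isomorphism} between two coordinate blocks — this is where the various pairwise-independence hypotheses of $R_p$ get used — and that normalizing $U_a$ and $U_b$ simultaneously is exactly accounted for by the residual $\GL_k(\F)^3$, with no leftover obstruction. Everything after the normalization is the routine linear-algebra bookkeeping of the induction together with a one-line dimension count.
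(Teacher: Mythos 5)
Your proof is correct and follows exactly the route the paper intends: the forward direction is immediate since $\mathbf{B}$ has entries in the prime field, and the converse is Reid's chain-of-collinearities argument transported to the block setting, which the paper itself does not write out but merely asserts (in a footnote) carries over to $k$-linear representations. Your write-up supplies precisely those details — the graph-of-an-isomorphism normalization, the recursion $M_j=jI_k$, and the final dependency $\{a,b,c_{p-1}\}$ forcing $p=0$ in $\F$ — and all the invoked flats are genuine dependencies of $R_p$, so the argument is sound.
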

Gordon~\cite{Gordon88} studied the algebraic representability of this class of matroids (they are denoted $M_p$ in \cite{Gordon88}), called the Reid geometries. He proved the following theorem:
\begin{ttheorem}\label{th:M7}
The matroid $R_p$ is algebraically representable over $\F$ if and only if $\Char(\F)=p$.
\end{ttheorem}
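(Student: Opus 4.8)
The plan is to establish the two implications separately, with essentially all the difficulty lying in the reverse one. The ``if'' direction is immediate: if $\Char(\F)=p$, then Theorem~\ref{th:M7lin} applied with $k=1$ gives a linear representation of $R_p$ over $\F$, and every linearly representable matroid is algebraically representable over the same field (cf.\ \cite[Chapter 6.7]{Oxl11}), so $R_p$ is algebraic over $\F$.

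For the converse, suppose $R_p$ is algebraically representable over $\F$. By Piff's conjecture (Theorem~\ref{th:basefield}) we may assume $\F$ is a prime field. If $\Char(\F)=0$, then by Ingleton's theorem $R_p$ would be linearly representable over some extension field of $\F$, necessarily of characteristic $0$, contradicting Theorem~\ref{th:M7lin}; hence $\F=\F_q$ for a prime $q$, and it remains to rule out $q\neq p$. So assume for contradiction that $R_p$ has an algebraic representation $\pi\colon E\to\E=\F_q(\pi(E))$ with $q\neq p$.

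The idea is to pass to the linear matroid $\bar M$ induced by the derivations (Section~\ref{subsec:induced}), which lies in characteristic $q$ yet must carry all the dependencies of $R_p$. Fix a basis $B=\{1,2,3\}$ of $R_p$. First I would invoke Lemma~\ref{lem:basicreplace} to replace $\pi$ by a representation $\pi'$, obtained by raising the values to suitable powers of $q$, so that the algebraic extension $\F_q(\pi'(B))\subseteq\E'$ is separable and so that no singleton is dependent in the induced derivation matroid $\bar M$; as Example~\ref{ex:alg2lin2} illustrates, enough Frobenius twists remove both the inseparability and the zero-gradient obstructions. After a change of coordinates on the rational function field $\F_q(\pi'(B))$ we may take $\pi'(i)=x_i$ for $i\in B$, so that $\nabla(\pi'(1)),\nabla(\pi'(2)),\nabla(\pi'(3))$ is the standard basis of $(\E')^3$. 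Then $\bar M$ is a loopless rank-$3$ matroid linearly represented over $\E'$, and, since every dependent set of $R_p$ is dependent in $\bar M$, every circuit of $R_p$ — in particular every collinear triple — remains dependent in $\bar M$.

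The last step is the configuration analysis. I would translate the defining collinearities of $R_p$ — the three edges, the lines $\{p_1,(0,1,i),(1,1,i)\}$, the line through $p_2$ and $p_3$ carrying $(0,1,1),\dots,(0,1,p-1)$, the line through $(1,1,0)$ and $p_3$ carrying $(1,1,1),\dots,(1,1,p-1)$, and the lines through $(1,0,1)$ — into linear relations among the gradients in $(\E')^3$, chase these to express every gradient in terms of a few scalar parameters, and show that the simultaneous closing-up of the two ``long'' lines forces $1+1+\cdots+1=0$ with $p$ summands in $\E'$, i.e.\ $\Char(\E')=p$; since $\Char(\E')=q\neq p$, this is the desired contradiction. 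I expect this step to be the main obstacle: $\bar M$ is genuinely a different matroid from $R_p$ (indeed it cannot equal $R_p$, which is not linear in characteristic $q$), so it has strictly more dependencies, and one must verify that the collinearities carrying the arithmetic of $R_p$ survive as honest collinearities among pairwise distinct points of $\bar M$ rather than collapsing — this is exactly where the separability and no-dependent-singleton normalizations from Lemma~\ref{lem:basicreplace} are needed, and it will likely require some further bookkeeping to exclude the degenerate cases in which the relevant gradients become proportional or vanish — after which one is left with a system of projective incidences over $\E'$ that, as in Reid's argument, can be realized only in characteristic $p$.
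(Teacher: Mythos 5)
First, a point of reference: the paper does not prove Theorem~\ref{th:M7} at all --- it is quoted as a known result of Gordon~\cite{Gordon88} (where the Reid geometries are denoted $M_p$), so there is no in-paper proof to compare yours against. Your easy direction is fine: Theorem~\ref{th:M7lin} with $k=1$ plus the fact that linear representability implies algebraic representability over the same field gives the ``if'' part, and the reduction to prime fields via Theorem~\ref{th:basefield} together with Ingleton's characteristic-$0$ result correctly disposes of $\Char(\F)=0$.

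The converse, however, is a plan rather than a proof, and the step you defer is precisely the entire content of Gordon's theorem. Two things are missing. First, Lemma~\ref{lem:basicreplace} by itself only says that Frobenius twists again yield a representation of $R_p$; it does not by itself produce separability, nonvanishing gradients, or the stronger nondegeneracy you will need (that the gradients of the points on each defining line of $R_p$ remain pairwise independent). Securing those properties is a genuine argument --- in the paper's own main theorem it occupies all of Lemma~\ref{lem:replace}, and for $R_p$ the analogous statement is exactly the Lindstr\"{o}m/Gordon preparation. Second, and more seriously, the ``configuration analysis'' that is supposed to force $1+1+\cdots+1=0$ with $p$ summands in $\E'$ is the crux, and you do not carry it out. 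The difficulty is the one you yourself flag: $\bar M$ is a genuinely different matroid from $R_p$ (it must be, since $R_p$ is not linear over a field of characteristic $q$), so points may unite and new lines may appear in $\bar M$; the Reid-style arithmetic chase uses not only the dependencies of $R_p$ but also specific \emph{independences} (distinctness of the points on the two long lines, nondegeneracy of the triangle of joints), each of which must be re-established for $\bar M$ or the computation collapses. Compare the proof of Lemma~\ref{lem:Fanocontain}, where a whole sequence of claims is needed just to control which points of $Q^{NF}_3(\mathsf{Q}_8)$ can unite. Until you actually write down the linear relations among the gradients, rule out the degenerate collapses, and derive $p\equiv 0$ in $\E'$, the proposal is a correct road map to Gordon's proof rather than a proof.
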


\paragraph{The Rank-3 Dowling Group Geometry.}
The linear representability of the Dowling geometries was determined already by Dowling~\cite{Dowlingb}.

\remove{The Dowling group geometries were introduced by Dowling~\cite{Dowlinga,Dowlingb}. The rank-$r$ Dowling geometry of the group $G$ is denoted by $Q_r(G)$. The general construction of $Q_r(G)$ can be found in \cite{Dowlingb} or \cite[Chapter 6.10]{Oxl11}. We use a slightly different construction of the rank-3 Dowling group geometries, following \cite{Aner}. For completeness, we include the definition in Appendix \ref{app:dowling}.

Notice that in the geometric representation of $Q_3(G)$, every point of $Q_3(G)$ lies on (at least) one of the following lines: $\set{p_1,p_2}$,$\set{p_2,p_3}$, $\set{p_1,p_3}$. We refer to these lines as the {\em edges} of $Q_3(G)$.

Dowling showed that the rank $r$ (for $r\geq 3$) Dowling geometry associated with a group $G$ is linearly representable over $\F$ exactly when $G$ is a subgroup of $\F^{\times}$ (the group of invertible elements of $\F$):}
\begin{ttheorem}\label{th:dowlinglin}
For $r\geq 3$, the matroid $Q_r(G)$ is linearly representable over a field $\F$ if and only if $G$ is a subgroup of $\F^{\times}$. In particular, if $Q_r(G)$ is linearly representable then $G$ is cyclic.
\end{ttheorem}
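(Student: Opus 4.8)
The plan is to prove both directions by explicit computation, since $Q_r(G)$ is pinned down concretely by its list of bases (equivalently, by the ``triangle'' lines in the geometric picture of Section~\ref{app:dowling}). The ``if'' direction is a routine circuit check; the real work is the ``only if'' direction, and the main obstacle there is bookkeeping: choosing normalizations so that a $3\times 3$ determinant takes a clean form, and then reading off a group homomorphism from the resulting identity.

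For the ``if'' direction, suppose $\psi\colon G\hookrightarrow\F^{\times}$ is an injective homomorphism. Represent $Q_r(G)$ in $\F^{r}$ by sending each joint $p_i$ to the standard basis vector $e_i$ and, for $1\le i<j\le r$ and $g\in G$, sending the point labelled $g$ on the edge $\{p_i,p_j\}$ to a vector of the form $e_i-\psi(g)^{\pm 1}e_j$, with the exponent chosen per edge so that the triangle relations come out with the right orientation. I would then check this is a representation of exactly $Q_r(G)$: three points on a common edge lie in a $2$-dimensional coordinate subspace, hence are dependent; for a triangle $\{g^{(1)}_i,g^{(2)}_j,g^{(3)}_\ell\}$ the corresponding $3\times 3$ determinant is, up to sign, a difference $\psi(a)\psi(b)-\psi(c)$ that vanishes iff $\psi(c)=\psi(ab)$, i.e.\ (by injectivity) iff the matroid's triangle condition $g_jg_ig_\ell=e$ holds; and every remaining potential circuit — in rank $>3$ the longer ``cycles'' of edge-points and the relations involving joints — is handled by the same kind of small rank computation. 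Distinctness and simplicity of the vectors use injectivity of $\psi$. This is Dowling's original construction.

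For the ``only if'' direction, let $A$ be an $\F$-linear representation of $Q_r(G)$. The joints $p_1,\dots,p_r$ form a basis of the matroid, so after a change of basis we may assume $p_i\mapsto e_i$. The restriction of $Q_r(G)$ to $\{p_1,p_2,p_3\}$ together with the points on the three edges among them is exactly $Q_3(G)$ (the collinearity and triangle relations among those elements are inherited, and the rank is $3$), and restriction preserves $\F$-representability, so it suffices to treat $r=3$. Since $Q_3(G)$ is simple, a point on the edge $\{p_1,p_2\}$ is dependent with $\{p_1,p_2\}$ but parallel to neither joint, so after scaling its vector it has the form $e_1+x(g)e_2$ with $x(g)\in\F^{\times}$; likewise the points on $\{p_2,p_3\}$ and $\{p_1,p_3\}$ become $e_2+y(g)e_3$ and $e_1-z(g)e_3$ (the minus sign chosen for later convenience), with $x,y,z\colon G\to\F^{\times}$; and distinct points being non-parallel forces $x,y,z$ to be injective. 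Using the residual freedom to rescale $e_1,e_2,e_3$, I would normalize $x(e)=y(e)=1$ (which then forces $z(e)=1$).

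Now the $3\times 3$ determinant governing the triangle $\{g^{(1)}_i,g^{(2)}_j,g^{(3)}_\ell\}$ comes out to $x(g_i)\,y(g_j)-z(g_\ell)$, so it vanishes exactly when $g_jg_ig_\ell=e$, that is, when $g_\ell=(g_jg_i)^{-1}=g_i^{-1}g_j^{-1}$. Taking $g_j=e$ and using injectivity of $z$ gives $z(g^{-1})=x(g)$ for all $g\in G$; taking $g_i=e$ likewise gives $z(g^{-1})=y(g)$, hence $x=y$. Substituting back into the general identity gives $x(g_jg_i)=x(g_i)x(g_j)$ for all $g_i,g_j\in G$, and since $\F^{\times}$ is abelian this exactly says $x\colon G\to\F^{\times}$ is an injective homomorphism. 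Thus $G$ is isomorphic to a subgroup of $\F^{\times}$, which completes the equivalence. Finally, $G$ is finite, so its image is a finite subgroup of the multiplicative group of a field and therefore cyclic; hence $G$ itself is cyclic, which is the ``in particular'' clause.
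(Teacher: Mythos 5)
Your proposal is correct in substance, but note that the paper does not prove Theorem \ref{th:dowlinglin} at all: it is quoted as a known result of Dowling \cite{Dowlingb}, so there is no ``paper proof'' to compare against. What you have written is essentially a reconstruction of Dowling's original argument. The core of it --- the ``only if'' direction in rank $3$ --- is complete and correct: normalizing the joints to $e_1,e_2,e_3$, scaling each edge point to $e_1+x(g)e_2$, $e_2+y(g)e_3$, $e_1-z(g)e_3$, using the residual diagonal freedom to set $x(\mathsf{e})=y(\mathsf{e})=1$, and reading the triangle determinant $x(g_i)y(g_j)-z(g_\ell)$ against the dependency condition $g_jg_ig_\ell=\mathsf{e}$ does yield $x=y$, $z(g^{-1})=x(g)$, and the multiplicativity $x(g_jg_i)=x(g_i)x(g_j)$; injectivity comes from simplicity of the matroid exactly as you say, and the ``in particular'' clause is the standard fact that finite subgroups of $\F^{\times}$ are cyclic. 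The reduction from general $r$ to $r=3$ by restricting to $\{p_1,p_2,p_3\}$ and the points on the edges among them is also sound. The only place where your argument is genuinely a sketch rather than a proof is the ``if'' direction for $r>3$: verifying that Dowling's vectors represent \emph{exactly} $Q_r(G)$ requires checking all circuits of the higher-rank geometry (which the paper never even defines explicitly, deferring to \cite{Dowlingb} and Oxley), not just the edge and triangle relations; you flag this but do not carry it out. For the purposes of this paper only the $r=3$ case is ever used, so this gap is harmless here, but a self-contained proof of the theorem as stated would need that verification.
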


Beimel et al.~\cite{Aner} generalized this statement (for $r=3$) to multi-linear representations.\remove{\footnote{The fact that a $2$-linear representation of the quaternion group exists was already shown by Pendavingh and van Zwam~\cite{Zwam13}.}}
\begin{ttheorem}\label{th:dowlingmul}
For a finite group $G$, the matroid $Q_{3}(G)$ is $k$-linearly representable over a field $\F$ if and only if there is a fixed-point free representation $\rho\colon G\rightarrow \GL_{k}(\F)$. Furthermore, if such a representation $\rho$ exists then the block matrix
\remove{
$$\,\,\,p_1\,\,\,\,\,\, p_2\,\,\,\,\,\,p_3\,\,\,\,\,\,\,\,g^{(1)}_1\,\,\,\,\,\,\,\,\,\,\,\,\,\,\,\,\,\,\,\,\,g^{(1)}_n\,\,\,\,\,\,\,\,g^{(2)}_1\,\,\,\,\,\,\,\,\,\,\,\,\,\,\,\,\,\,\,\,\,g^{(2)}_n\,\,\,\,\,\,\,\,\,\,g^{(3)}_1\,\,\,\,\,\,\,\,\,\,\,\,\,\,\,\,\,\,\,\,\,\,\,g^{(3)}_n$$
\begin{eqnarray*}
\mathbf{A_{\rho}}:=\left[\begin{matrix}
I_{k}&0&0&-I_{k}&\ldots &-I_{k}\\
0&I_{k}&0&\rho (g_{1})&\ldots &\rho (g_{n})\\
0&0&I_{k}&0&\ldots &0
\end{matrix}\right.
\left.\begin{matrix}
0&\ldots &0&\rho (g_{1})&\ldots &\rho (g_{n})\\
-I_{k}&\ldots &-I_{k}&0&\ldots &0\\
\rho (g_{1})&\ldots &\rho (g_{n})&-I_{k}&\ldots &-I_{k}
\end{matrix}
\right]
\end{eqnarray*}
}
\[
  \mathbf{A_{\rho}}:=\kbordermatrix{
    & p_1 & p_2 & p_3 & g^{(1)}_1 & & g^{(1)}_n & g^{(2)}_1 & & g^{(2)}_n & g^{(3)}_1 & & g^{(3)}_n  \\
     & I_{k}&0&0&-I_{k}&\ldots &-I_{k} & 0&\ldots &0&\rho (g_{1})&\ldots &\rho (g_{n}) \\
     & 0&I_{k}&0&\rho (g_{1})&\ldots &\rho (g_{n}) & -I_{k}&\ldots &-I_{k}&0&\ldots &0 \\
     & 0&0&I_{k}&0&\ldots &0 &\rho (g_{1})&\ldots &\rho (g_{n})&-I_{k}&\ldots &-I_{k} \\
  }
\]
is a $k$-linear representation of $Q_{3}(G)$.
\end{ttheorem}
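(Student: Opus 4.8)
The plan is to treat the two implications separately; the ``furthermore'' clause gives the ``if'' direction, so it remains to (a) verify that $\mathbf{A_\rho}$ is a $k$-linear representation of $Q_3(G)$ when $\rho$ is fixed-point free, and (b) extract a fixed-point free $\rho\colon G\to\GL_k(\F)$ from an arbitrary $k$-linear representation of $Q_3(G)$ over $\F$. Throughout, for $a\in G$ I write $a^{(1)},a^{(2)},a^{(3)}$ for the three matroid elements associated with $a$.

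For (a), fix a fixed-point free $\rho$. Since $Q_3(G)$ has rank $3$ and the joint columns of $\mathbf{A_\rho}$ form the block $I_{3k}$, the whole matrix has rank $3k$, so it suffices to show that the column blocks of any subset of size at most $3$ span a space of dimension $k$ times its rank in $Q_3(G)$. Each single column block contains an invertible $k\times k$ sub-block ($\pm I_k$), hence has rank $k$. For a pair I would solve the block relation $\mathbf{C}_1P+\mathbf{C}_2Q=0$ in the few configurations that occur — two joints, a joint and some $a^{(s)}$, two edge-mates $a^{(s)},b^{(s)}$ with $a\ne b$, and $a^{(s)},b^{(t)}$ with $s\ne t$ — and in every case deduce $P=Q=0$; the only nontrivial case, $\{a^{(s)},b^{(s)}\}$, reduces to $(\rho(a)-\rho(b))P=0$ and one invokes Observation \ref{ob:subtraction}. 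For triples: a set of three points lying on one edge has all its column blocks inside a single coordinate $2k$-block, so its span has dimension exactly $2k$; a mixed triple $\{a^{(1)},b^{(2)},c^{(3)}\}$ with $bac=e$ carries the $k$-dimensional space of relations obtained by telescoping through the three block-rows, the consistency condition being $\rho(bac)=I_k$; and every basis triple is independent — when a joint is present the relation telescopes to $P=Q=R=0$ using only invertibility of the $\rho(g)$'s, while a mixed triple with $bac\ne e$ telescopes to $(\rho(bac)-I_k)R=0$, which forces $R=0$ precisely because $\rho$ is fixed-point free. Thus the non-bases realized by $\mathbf{A_\rho}$ coincide with those of $Q_3(G)$.

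For (b), let $\mathbf{A}$ be a $k$-linear representation of $Q_3(G)$ over $\F$. Since $\{p_1,p_2,p_3\}$ is a basis, the column blocks $U_{p_1},U_{p_2},U_{p_3}$ are complementary $k$-subspaces summing to the whole $3k$-dimensional column space; after a change of basis together with column rescalings I may take $p_s$ to be the stacked identity in coordinate block $s$. Because $a^{(1)}$ lies on the edge $\{p_1,p_2\}$ its block has the form $[P;Q;0]$, and the pairs $\{p_1,a^{(1)}\}$ and $\{p_2,a^{(1)}\}$ being independent force $P$ and $Q$ invertible; rescaling, I may assume $a^{(1)}=[I_k;\alpha_a;0]$ with $\alpha_a\in\GL_k(\F)$, and symmetrically $a^{(2)}=[0;I_k;\beta_a]$ and $a^{(3)}=[\gamma_a;0;I_k]$ with $\beta_a,\gamma_a\in\GL_k(\F)$; using the residual block-diagonal conjugation freedom I may further arrange $\alpha_e=I_k$. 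Now, for all $a,b,c\in G$ with $bac=e$, the triple $\{a^{(1)},b^{(2)},c^{(3)}\}$ is a rank-$2$ flat of $Q_3(G)$, hence its $3k$ columns admit a $k$-dimensional space of relations; writing $[I_k;\alpha_a;0]P+[0;I_k;\beta_b]Q+[\gamma_c;0;I_k]R=0$ and eliminating gives $(I_k+\beta_b\alpha_a\gamma_c)R=0$ for all $R$, so $\beta_b\alpha_a\gamma_c=-I_k$, i.e. $\gamma_{a^{-1}b^{-1}}=-\alpha_a^{-1}\beta_b^{-1}$. Specializing $a=e$ and $b=e$ in this identity and combining the outcomes gives $\gamma_{a^{-1}}=\alpha_a^{-1}\gamma_e$ and then $\alpha_{ba}=\alpha_b\alpha_a$, so $\rho(g):=\alpha_g$ is a homomorphism $G\to\GL_k(\F)$.

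Finally, $\rho$ is fixed-point free: for distinct $a,b\in G$ the set $\{a^{(1)},b^{(1)},p_3\}$ is a basis of $Q_3(G)$ (three distinct points, not all on one edge, and not a mixed triple), so the columns $[I_k;\alpha_a;0]$, $[I_k;\alpha_b;0]$, $[0;0;I_k]$ are independent; the block relation forces $R=0$, $Q=-P$ and $(\alpha_a-\alpha_b)P=0$, whence $\alpha_a-\alpha_b\in\GL_k(\F)$. Then for any $e\ne x\in G$, writing $x=b^{-1}a$, we get $\rho(x)-I_k=\alpha_b^{-1}(\alpha_a-\alpha_b)$ invertible, so $1$ is not an eigenvalue of $\rho(x)$, as required. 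I expect the crux to lie in (b): arranging the normalizations so that the mixed-triple non-basis relations collapse cleanly to the homomorphism law $\alpha_{ba}=\alpha_b\alpha_a$, and noticing that it is the \emph{triple} $\{a^{(1)},b^{(1)},p_3\}$, rather than a pair, that encodes fixed-point freeness; part (a) is then essentially bookkeeping once one sees that only the same-edge pairs and the mixed triples feel the hypothesis.
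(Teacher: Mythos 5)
The paper does not prove Theorem \ref{th:dowlingmul} at all: it is quoted as a known result of Beimel et al.~\cite{Aner}, so there is no in-paper proof to compare against. Your argument is correct and is essentially the standard one for this theorem: direction (a) reduces every dependency check to the invertibility of $\rho(bac)-I_k$ or $\rho(a)-\rho(b)$ (Observation \ref{ob:subtraction}), and direction (b) normalizes the joint blocks to coordinate blocks, normalizes the edge points to $[I_k;\alpha_a;0]$, $[0;I_k;\beta_a]$, $[\gamma_a;0;I_k]$, and reads the homomorphism law off the mixed-triple relations $\beta_b\alpha_a\gamma_c=-I_k$. One small remark: fixed-point freeness already follows from the independence of the \emph{pair} $\{a^{(1)},b^{(1)}\}$ (which directly forces $\alpha_a-\alpha_b$ invertible); the triple with $p_3$ is not needed. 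Also, to fully justify the representation one should note that checking subsets of size at most $3$ suffices (by submodularity of $\dim$, any larger set of matroid rank $2$ is then automatically spanned by two of its members), which your sketch implicitly assumes.
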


We will also need the following simple, well-known lemma about $Q_3(G)$. 
\begin{lemma}\label{lem:maxlines}
Let $M$ be a matroid on the same ground set as $Q_3(G)$, such that every dependent set of $Q_3(G)$ is dependent in $M$. If $M$ is simple and $r(M)=3$, then $M=Q_3(G)$.
\end{lemma}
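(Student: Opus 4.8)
The plan is to show that $M$ must have exactly the same collection of independent sets (equivalently bases) as $Q_3(G)$. Since $M$ is simple and has rank $3$ on the same ground set, and since every dependent set of $Q_3(G)$ is already dependent in $M$, it suffices to prove the reverse inclusion: every dependent set of $M$ is dependent in $Q_3(G)$, or equivalently, every basis of $Q_3(G)$ is independent in $M$. A basis of $Q_3(G)$ is any $3$-element set that is neither (i) three points on a common edge, nor (ii) a triple $\{g^{(1)}_i, g^{(2)}_j, g^{(3)}_\ell\}$ with $g_j g_i g_\ell = e$. So the task reduces to: for each such "good" triple $T$, show $T$ is independent in $M$.

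\textbf{Key steps.} First I would record what simplicity and $r(M)=3$ already give us: no loops, no parallel pairs, so every $1$- and $2$-element subset is independent in $M$, and every $3$-element dependent set of $M$ is a "line" (rank-$2$ flat) of $M$ with at least $3$ points. Second, I would invoke the hypothesis that the dependent sets of $Q_3(G)$ — namely the edges $\{p_1,p_2\}$-lines, $\{p_2,p_3\}$-lines, $\{p_1,p_3\}$-lines (each containing the two joints plus all the $g^{(t)}$ on that edge) and the "group lines" $\{g^{(1)}_i,g^{(2)}_j,g^{(3)}_\ell\}$ with $g_jg_ig_\ell=e$ — are all dependent in $M$, hence each is contained in a line of $M$. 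Third, the crucial counting step: I would count the lines of $Q_3(G)$ through each point and the total incidences, and argue that because $M$ is a rank-$3$ simple matroid on the same point set whose lines refine (contain) those of $Q_3(G)$, the only way to fit all the forced collinearities is for the line families to coincide exactly. Concretely, fix a point, say $p_1$; in $Q_3(G)$ it lies on the two edge-lines $\overline{p_1p_2}$ and $\overline{p_1p_3}$ and on no other line, and these two lines partition $E\setminus\{p_1\}$; since in $M$ these two sets are each inside a line through $p_1$ and together exhaust all other points, and lines through $p_1$ in $M$ partition $E\setminus\{p_1\}$ as well, $M$ can have no further line through $p_1$, so the lines through $p_1$ in $M$ are exactly $\overline{p_1p_2}$ and $\overline{p_1p_3}$. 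The same argument at $p_2,p_3$ pins down all edge-lines. Fourth, for a "group point" such as $g^{(1)}_i$: it lies on edge-line $\overline{p_1p_2}$ and, for each $\ell$, on the group line $\{g^{(1)}_i, g^{(2)}_{j(\ell)}, g^{(3)}_\ell\}$ where $j(\ell)$ is determined by $g_{j}g_ig_\ell=e$; as $\ell$ ranges over $G$ these group lines together with $\overline{p_1p_2}\setminus\{g^{(1)}_i\}$ again partition $E\setminus\{g^{(1)}_i\}$, so by the same partition argument $M$ has exactly these lines through $g^{(1)}_i$ and no others. Running this over all points shows the line set of $M$ equals the line set of $Q_3(G)$, hence the rank-$3$ flats coincide, hence (with ranks $0,1,2,3$ forced by simplicity) $M = Q_3(G)$.

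\textbf{Main obstacle.} The genuinely delicate point is the partition argument: one must verify that in $Q_3(G)$ the lines through any fixed point $x$ really do partition $E \setminus \{x\}$ (i.e., any two points $\neq x$ are collinear with $x$ via exactly one line of $Q_3(G)$), and that these are \emph{all} the lines through $x$. This is a small structural fact about the Dowling geometry — it holds because $Q_3(G)$ is a rank-$3$ matroid in which every rank-$2$ flat is one of the listed lines, so it follows from the matroid axioms for $Q_3(G)$ itself (any two points span a unique line). Once that is in hand, the comparison with $M$ is forced: if $M$ had an extra collinearity $\{x,a,b\}$ not present in $Q_3(G)$, then the $M$-line through $x$ containing $a$ would have to contain $b$, but $a$ and $b$ lie on different $Q_3(G)$-lines through $x$, each of which is forced to be inside a single $M$-line through $x$; distinct $M$-lines through a common point meet only in that point, giving a contradiction. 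I would also handle the trivial bookkeeping that $E$ has enough points (at least $6$, since $|G|\geq 1$) for "simple of rank $3$" to be meaningful, though this is immediate. With the partition fact isolated as the lemma's heart, the rest is routine.
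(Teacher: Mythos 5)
There is a genuine gap, and it sits exactly at the step you yourself flag as the ``heart'' of the argument. First, the exhaustion claim is false as stated: the two edge-lines of $Q_3(G)$ through $p_1$ contain only $p_2$, $p_3$, and the points $g^{(1)}_i$, $g^{(3)}_\ell$; they miss every point $g^{(2)}_j$ on the opposite edge. Likewise, the edge-line and group-lines through $g^{(1)}_i$ miss the opposite joint $p_3$. So the nontrivial lines of $Q_3(G)$ through a fixed point do \emph{not} partition the complement; to get a partition you must also count the trivial two-point lines (e.g.\ $\{p_1,g^{(2)}_j\}$), and those are precisely the places where new collinearities of $M$ could appear. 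Second, and more seriously, even after repairing the partition, the argument does not deliver the conclusion: knowing that each $Q_3(G)$-line through $x$ is contained in some $M$-line through $x$, and that the $M$-lines through $x$ partition $E\setminus\{x\}$, is perfectly consistent with the $M$-partition being strictly \emph{coarser} --- two distinct $Q_3(G)$-lines through $x$ could merge into one $M$-line, or a trivial line could absorb a third point. Your closing ``contradiction'' (that $a$ and $b$ lie on different $Q_3(G)$-lines through $x$, each inside an $M$-line) only shows those two $M$-lines coincide; it does not contradict anything. Ruling out such merging is the entire content of the lemma, and nothing in the proposal does it.

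What actually closes the argument --- and what the paper's proof does --- is to show that any new $3$-element dependency forces the three joints $p_1,p_2,p_3$ to be collinear in $M$, and then to use the structural fact that every point of $Q_3(G)$ lies on an edge spanned by two joints: once the joints are collinear, every edge (hence every point) lies in that single $M$-line, so $r(M)=2$, contradicting $r(M)=3$. Concretely, the paper splits a putative new dependent triple $\{a,b,c\}$ into two cases (two of the points on a common edge, or one point on each edge); in the second case it uses the group line $\{a,b,((hg)^{-1})^{(3)}\}$ of $Q_3(G)$ sharing the pair $\{a,b\}$ with the new triple to force four points, hence two joints, hence all three joints, onto one $M$-line. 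If you want to salvage your line-by-line approach, you would need to insert exactly this mechanism to show that no two lines through a point can merge; as written, the proposal assumes the conclusion at the decisive step.
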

The lemma basically says that $Q_3(G)$ is maximally dependent, in the sense that, no new line can be added without causing some degeneracy. For completeness, in Section \ref{sec:proofs} we present a proof that shares some similar ideas with the proof of our main theorem.

\section{A Non-Algebraic Multi-Linearly Representable Matroid}\label{sec:main}
In this Section we prove that there exists a multi-linear, non-algebraic matroid. The matroid is constructed as a direct sum of the Reid Geometry $R_p$, with $p=7$, and a new matroid that we introduce, $Q^{NF}_3(\mathsf{Q}_8)$. The matroid $Q^{NF}_3(\mathsf{Q}_8)$ contains both the non-Fano matroid and the rank-3 Dowling geometry of the quaternions as minors.

To show that $Q^{NF}_3(\mathsf{Q}_8)\oplus R_7$ is not algebraic, we prove that the matroids $R_7$ and $Q^{NF}_3(\mathsf{Q}_8)$ cannot be algebraically representable over the same field. From the result of Gordon~\cite{Gordon88}, we know that $R_7$ is algebraically representable over $\K$ if and only if $\Char(\K)=7$. 
So it remains to show that $Q^{NF}_3(\mathsf{Q}_8)$ is not algebraic over any field $\K$ with $\Char(\K)=7$. We prove a stronger statement: $Q^{NF}_3(\mathsf{Q}_8)$ is not algebraic over any field $\K$ with $\Char(\K)\neq 2$. To prove this, we use derivations. 

We first show that the representation can be chosen ``correctly", in similar manner to the works of Lindstrom~\cite{Lind85} and Gordon~\cite{Gordon88}. We show that the correct representation induces a linear representation of a different matroid,  $\overline{Q^{NF}_3(\mathsf{Q}_8)}$, which retains some of the structure of $Q^{NF}_3(\mathsf{Q}_8)$. The linear representation of $\overline{Q^{NF}_3(\mathsf{Q}_8)}$ is over a field $\E$ that extends $\K$. Therefore, finding restrictions on the characteristics of $\K$ reduces to finding restrictions on the characteristics of $\E$.

Then, we investigate the structure of $\overline{Q^{NF}_3(\mathsf{Q}_8)}$. We show that the degeneracy generated by moving from $Q^{NF}_3(\mathsf{Q}_8)$ to $\overline{Q^{NF}_3(\mathsf{Q}_8)}$ can be described by a congruence relation on $\mathsf{Q}_8$. From this, we deduce that $\overline{Q^{NF}_3(\mathsf{Q}_8)}$ contains the Fano matroid as a minor. Since the Fano matroid is linearly representable only over fields of characteristic 2, it forces the required restriction on $\E$. 

To show that $Q^{NF}_3(\mathsf{Q}_8)\oplus R_7$ is $2$-linearly representable, we give an explicit representation over the field $\F_{49}$.

\paragraph{A new matroid.}
We now present a new matroid. We construct it by giving an explicit $2$-linear representation. As explained, it is necessary to verify that the representation indeed defines a matroid.

Let $\F_{49}$ be the unique field with 49 elements ($\Char(\F_{49})=7$) and let $\zeta_4\in \F_{49}$ be a primitive root of unity of order 4. Denote by $\mathsf{Q}_8=\set{\mathsf{e},\mathsf{-e},\mathsf{i},\mathsf{-i},\mathsf{j},\mathsf{-j},\mathsf{k},\mathsf{-k}}$ the quaternion group, and by $\rho\colon \mathsf{Q}_8\to \GL_2(\F_{49})$ the following representation:\footnote{The quaternion group also has a different fixed-point free representation over $\F_7$ which works out similarly. We use $\F_{49}$ only for simplification of the proof of Lemma \ref{lem:q3nf} in Section \ref{sec:proofs}.}
\begin{eqnarray*}
\rho(\mathsf{\pm e})=\pm \left(\begin{matrix}
 1&0\\0& 1\end{matrix}\right),\rho(\mathsf{\pm i})=\pm\left(\begin{matrix}
 \zeta_4&0\\0& -\zeta_4\end{matrix}\right),\rho(\mathsf{\pm j})=\pm\left(\begin{matrix}
0& -1\\ 1&0\end{matrix}\right),\rho(\mathsf{\pm k})=\pm\left(\begin{matrix}
0&-\zeta_4\\ -\zeta_4&0\end{matrix}\right).
\end{eqnarray*} 
It is easy to see that $\rho$ is fixed-point free. We now look at the matrix
\remove{
$$\,\,\,\,\,\,p_1\,\,\,\,\,\, p_2\,\,\,\,\,p_3\,\,\,\,\,\,\,\,\mathsf{e}^{(1)}\,\,\,\,\,\,\,(-\mathsf{e})^{(1)}\,\,\,\,\,\,\,\,\,\,\,(-\mathsf{k})^{(1)}\,\,\,\mathsf{e}^{(2)}\,\,\,\,\,\,\,\,\,\,\,\,\,\,\,(-\mathsf{k})^{(2)}\,\,\,\,\mathsf{e}^{(3)}\,\,\,\,\,\,\,\,\,\,\,\,\,\,\,\,\,(-\mathsf{k})^{(3)}\,\,\,\,\,O$$
\begin{eqnarray*}
\mathbf{A^*_{\rho}}:=\left[\begin{matrix}
I_{2}&0&0&-I_{2}&-I_2&\ldots &-I_{2}\\
0&I_{2}&0&\rho (\mathsf{e})&\rho(-\mathsf{e})&\ldots &\rho (-\mathsf{k})\\
0&0&I_{2}&0&0&\ldots &0
\end{matrix}\right.
\left.\begin{matrix}
0&\ldots &0&\rho (\mathsf{e})&\ldots &\rho (-\mathsf{k})&I_2\\
-I_{2}&\ldots &-I_{2}&0&\ldots &0&I_2\\
\rho (\mathsf{e})&\ldots &\rho (-\mathsf{k})&-I_{2}&\ldots &-I_{2}&I_2
\end{matrix}
\right]{\raisebox{-3ex}{.}}
\end{eqnarray*}
}
\[
  \mathbf{A^*_{\rho}}:=\kbordermatrix{
    & p_1 & p_2 & p_3 & \mathsf{e}^{(1)} & -\mathsf{e}^{(1)} & & -\mathsf{k}^{(1)} & \mathsf{e}^{(2)} & & -\mathsf{k}^{(2)} & \mathsf{e}^{(3)} & & -\mathsf{k}^{(3)} & O \\
     & I_{2}&0&0&-I_{2}&-I_2&\ldots &-I_{2} & 0&\ldots &0&\rho (\mathsf{e})&\ldots &\rho (-\mathsf{k})&I_2\\
     & 0&I_{2}&0&\rho (\mathsf{e})&\rho(-\mathsf{e})&\ldots &\rho (-\mathsf{k}) &-I_{2}&\ldots &-I_{2}&0&\ldots &0&I_2\\
     & 0&0&I_{2}&0&0&\ldots &0 &\rho (\mathsf{e})&\ldots &\rho (-\mathsf{k})&-I_{2}&\ldots &-I_{2}&I_2\\
  }{\raisebox{-3ex}{.}}
\]
\begin{lemma}\label{lem:q3nf}
The matrix $\mathbf{A^*_{\rho}}$ is a $2$-linear representation over the field $\F_{49}$, of a matroid which contains both $Q_3(\mathsf{Q}_8)$ and the non-Fano matroid\remove{$F^-_7$} as minors.
\end{lemma}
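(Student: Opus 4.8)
The plan is to verify the claim in two essentially separate parts: first that $\mathbf{A^*_\rho}$ defines a matroid at all (i.e.\ that the rank function is integer-valued after dividing by $k=2$), and then that this matroid has $Q_3(\mathsf{Q}_8)$ and the non-Fano matroid as minors. For the first part, the natural move is to recognize that $\mathbf{A^*_\rho}$ is obtained from $\mathbf{A_\rho}$ (the $k$-linear representation of $Q_3(\mathsf{Q}_8)$ from Theorem~\ref{th:dowlingmul}) by appending a single new element $O$, whose block column is $(I_2,I_2,I_2)^{T}$. So I would argue that for any subset $X$ of the ground set, the span of the corresponding $2\times2$ blocks has dimension a multiple of $2$: either $X$ avoids $O$, in which case this is exactly the statement that $\mathbf{A_\rho}$ represents $Q_3(\mathsf{Q}_8)$, or $X$ contains $O$, in which case I analyze how adding the column $(I_2,I_2,I_2)^{T}$ changes the rank. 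The key observation is that $(I_2,I_2,I_2)^{T}$, together with the blocks of $Q_3(\mathsf{Q}_8)$, behaves like a "generic" extra point on no special line except possibly a few forced ones, so the rank either stays the same or jumps by exactly $2$; the block structure (every relevant submatrix being built from $2\times2$ blocks, with invertibility of differences $\rho(g)-\rho(h)$ guaranteed by Observation~\ref{ob:subtraction}) is what forces multiplicity $2$.

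For the minor containing $Q_3(\mathsf{Q}_8)$: deleting the element $O$ from the matroid of $\mathbf{A^*_\rho}$ leaves exactly the matroid of $\mathbf{A_\rho}$, which is $Q_3(\mathsf{Q}_8)$ by Theorem~\ref{th:dowlingmul} (using that $\rho$ is fixed-point free, which the text asserts is easy to check for this explicit $\rho$). So that minor is immediate once the matroid is shown to be well-defined. The interesting part is producing the non-Fano minor. Here I would exploit the element $O$ and its block column $(I_2,I_2,I_2)^{T}$: the plan is to choose a small, carefully selected set of points of $Q_3(\mathsf{Q}_8)$ — three on the edges (near $p_1,p_2,p_3$) plus a few of the $g^{(i)}$'s — together with $O$, and then contract/delete the remaining elements so that the restriction becomes a rank-$3$ matroid on $7$ points. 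One then checks, by explicit $2\times2$ block linear algebra over $\F_{49}$, which triples of these $7$ points are dependent. The point of using the specific fixed-point free representation $\rho$ over $\F_{49}$ (rather than $\F_7$) is presumably that with $\zeta_4\in\F_{49}$ the relevant $2\times2$ determinants come out so that the seven chosen points realize precisely the non-Fano dependency pattern: the three "diagonal" lines of the Fano configuration are dependent but the central line is independent (that is the defining feature distinguishing non-Fano from Fano, and it is exactly where characteristic $\neq 2$ shows up).

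The main obstacle I anticipate is the second part — pinning down which $7$ elements to keep and which minor operations to apply so that the restriction is exactly the non-Fano matroid, and then verifying the $\binom{7}{3}$ dependency conditions. This is a finite but genuinely fiddly computation in $2\times2$ matrices over $\F_{49}$, and it is sensitive to the precise form of $\rho$ on $\mathsf{i},\mathsf{j},\mathsf{k}$; getting the bookkeeping right (which group element sits on which edge, and how the products $g_j g_i g_\ell$ interact with the new point $O$) is where the real work lies. The first part, by contrast, should be routine once one sets up the block-rank argument and invokes Observation~\ref{ob:subtraction} and Theorem~\ref{th:dowlingmul}. Since the paper defers this lemma's proof to Section~\ref{sec:proofs}, I expect the authors carry out exactly this explicit verification there, and I would follow the same route: name the seven elements, write down their $3\times3$ block columns, and tabulate the rank of each triple.
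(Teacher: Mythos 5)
Your overall strategy matches the paper's: subsets avoiding $O$ are handled by Theorem \ref{th:dowlingmul}, subsets containing $O$ by block linear algebra, and the non-Fano matroid is exhibited as a restriction to seven elements (in the paper these are $p_1,p_2,p_3,(\mathsf{-e})^{(1)},(\mathsf{-e})^{(2)},(\mathsf{-e})^{(3)},O$; it is a pure restriction, no contraction is needed). However, there is a genuine gap in your first part. You assert that the evenness of the rank jump when $O$ is adjoined is ``forced'' by the block structure together with Observation \ref{ob:subtraction}. That observation only disposes of the easy configurations, such as the triple $\set{p_3,g^{(1)},O}$, where the rank reduces to $2+\rank\bigl(\rho(\mathsf{e})-\rho(-g)\bigr)$. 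The case carrying all the arithmetic content is a triple $\set{g_i^{(1)},g_j^{(2)},O}$ with $g_i,g_j\in\mathsf{Q}_8$: there the $6\times 6$ block matrix has rank $4+\rank\bigl(I_2+\rho(g_j)+\rho(g_jg_i)\bigr)$, and one must rule out rank $1$ for the $2\times 2$ matrix $I_2+\rho(g_j)+\rho(g_jg_i)$. This is a sum of \emph{three} images of $\rho$, so Observation \ref{ob:subtraction} (which concerns a difference of two images) says nothing about it, and fixed-point-freeness alone does not suffice. The paper computes $\det\bigl(I_2+\rho(g_j)+\rho(g_jg_i)\bigr)=a^2+b^2+c^2+d^2$ with $|a|+|b|+|c|+|d|\in\set{1,3}$, which is nonzero modulo $7$; this is precisely where the choice of characteristic enters. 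Your ``generic extra point'' heuristic is demonstrably false: the paper remarks that the very same matrix in characteristic $3$ or $5$ fails to be a $2$-linear representation, because there the sum of squares can vanish. So the step you dismiss as routine is exactly the step requiring the specific computation.

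Your second part is correct in outline and coincides with the paper's: name the seven elements, write their block columns, and tabulate the ranks of triples, with the central triangle $\set{(\mathsf{-e})^{(1)},(\mathsf{-e})^{(2)},(\mathsf{-e})^{(3)}}$ having rank $3$ because $\Char(\F_{49})\neq 2$. You correctly anticipate that this is a finite check sensitive to the explicit form of $\rho$, but you leave the choice of the seven elements open, whereas identifying them (the three joints, the three copies of $\mathsf{-e}$, and $O$) is what makes the verification short.
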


We verify that this is indeed a 2-linear representation, by verifying that the rank function of the matroid is properly defined , i.e., the rank of every relevant sub-matrix is even. Since the proof is simple and technical, we postpone it until Section \ref{sec:proofs}. We note that for a specific $k$-linear representation (e.g., this one), this can also be verified by a computer.

The restriction to $\set{p_1,p_2,p_3,\mathsf{e}^{(1)},...,(-\mathsf{k})^{(3)}}$ is $Q_3(\mathsf{Q}_8)$, and the restriction to $\{p_1$, $p_2$, $p_3$, $(\mathsf{-e})^{(1)}$, $(\mathsf{-e})^{(2)}$, $(\mathsf{-e})^{(3)}$, $O\}$ is the non-Fano matroid (cf. Section \ref{sec:proofs}).
\remove{
A different way of constructing the above matroid is the following: take the matroid $Q_3(\mathsf{Q}_8)$ and add a point $O$ such that
\begin{enumerate}
\item $r(\set{p_i,p_j,O})=3$ for all $1\leq i<j \leq 3$,
\item $r(\set{p_i,O,-\mathsf{e}^{(j)}})$, whenever $-\mathsf{e}^{(j)}$ is on the edge opposite of $p_i$.
\end{enumerate}
}

Denote the above matroid by $Q^{NF}_3(\mathsf{Q}_8)$, and let us present our main theorem. We show that if $Q^{NF}_3(\mathsf{Q}_8)$ is algebraic, then its algebraic representation must be over a field of characteristic 2.
\begin{ttheorem}\label{th:main}
If $Q^{NF}_3(\mathsf{Q}_8)$ is algebraically representable over $\K$, then $\Char(\K)=2$.
\end{ttheorem}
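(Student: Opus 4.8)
The plan is to reduce to a prime base field, pass from a hypothetical algebraic representation of $Q^{NF}_3(\mathsf{Q}_8)$ to the linear matroid induced by its derivations, and then exhibit the Fano matroid as a restriction of that linear matroid --- which is impossible over a field of characteristic $\neq 2$. First, by Piff's conjecture (Theorem~\ref{th:basefield}) we may assume $\K$ is a prime field. If $\Char(\K)=0$ then $Q^{NF}_3(\mathsf{Q}_8)$ would be linear (every algebraic matroid in characteristic $0$ is linear, as recalled in Section~\ref{subsec:induced}), but it has $Q_3(\mathsf{Q}_8)$ as a restriction, which is not linear over any field by Theorem~\ref{th:dowlinglin} because $\mathsf{Q}_8$ is non-abelian; so $\Char(\K)=0$ is impossible, and we may assume $\Char(\K)=p$ for an \emph{odd} prime $p$ and aim for a contradiction.

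Let $\pi\colon E\to\E:=\F_p(\pi(E))$ be an algebraic representation. Following the method of Lindstr\"om~\cite{Lind85} and Gordon~\cite{Gordon88}, and using Lemma~\ref{lem:basicreplace} to replace each $\pi(e)$ by a suitable power $(\pi(e))^{p^{m(e)}}$, I would first prove a technical lemma (deferred to Section~\ref{sec:proofs}) that the representation can be chosen so that the extension $\F_p(\pi(p_1),\pi(p_2),\pi(p_3))\subseteq\E$ is separable and $\nabla(\pi(e))\neq 0$ for all $e\in E$. The induced derivations $\bar D_1,\bar D_2,\bar D_3$ (Section~\ref{subsec:induced}) then give, over $\E$, a linear representation of a matroid $\overline{Q^{NF}_3(\mathsf{Q}_8)}$ on the same ground set, in which $p_1,p_2,p_3$ are the standard basis vectors $e_1,e_2,e_3$, every column is nonzero, and every dependent set of $Q^{NF}_3(\mathsf{Q}_8)$ stays dependent. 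Thus a restriction on $\Char(\E)$ yields a restriction on $\Char(\K)$, since $\K\subseteq\E$.

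The structural core is the analysis of $\overline{Q^{NF}_3(\mathsf{Q}_8)}$, in the spirit of Lemma~\ref{lem:maxlines} (which says $Q_3(G)$ is maximally dependent). Each $g^{(m)}$ lies on an edge of $Q_3(\mathsf{Q}_8)$, so its column lies in the span of two of $e_1,e_2,e_3$; a short argument using the surviving collinearities $\{g^{(1)}_i,g^{(2)}_j,g^{(3)}_\ell\}$ (which hold whenever $g_jg_ig_\ell=\mathsf{e}$, together with the non-vanishing of the gradients) shows each $g^{(m)}$ is not parallel to a joint, so the columns of $g^{(1)},g^{(2)},g^{(3)}$ can be scaled to $(x_g,1,0)$, $(0,y_g,1)$, $(1,0,z_g)$ with $x_g,y_g,z_g\in\E^{\times}$. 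Expanding a $3\times 3$ determinant, the surviving collinearity for $g_jg_ig_\ell=\mathsf{e}$ becomes $x_{g_i}y_{g_j}z_{g_\ell}=-1$; specializing $g_i$ and then $g_j$ to $\mathsf{e}$ forces $y_g=\lambda x_g$ for a fixed $\lambda\in\E^{\times}$ and determines $z_g$, after which one checks that $\chi(g):=x_g/x_{\mathsf{e}}$ is a group homomorphism $\mathsf{Q}_8\to\E^{\times}$ --- this is the ``congruence on $\mathsf{Q}_8$'' governing the degeneracies. Since $\mathsf{Q}_8$ is non-abelian while every finite subgroup of $\E^{\times}$ is cyclic, $\ker\chi\neq\{\mathsf{e}\}$; as every nontrivial subgroup of $\mathsf{Q}_8$ contains its center $\{\mathsf{e},\mathsf{-e}\}$, we get $\mathsf{-e}\in\ker\chi$, hence $x_{\mathsf{-e}}=x_{\mathsf{e}}$, $y_{\mathsf{-e}}=y_{\mathsf{e}}$, $z_{\mathsf{-e}}=z_{\mathsf{e}}$. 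Writing these common values as $x,y,z$, the line $\{\mathsf{e}^{(1)},\mathsf{e}^{(2)},\mathsf{e}^{(3)}\}$ (present since $\mathsf{e}\cdot\mathsf{e}\cdot\mathsf{e}=\mathsf{e}$) gives $xyz=-1$, and $\{(\mathsf{-e})^{(1)},(\mathsf{-e})^{(2)},(\mathsf{-e})^{(3)}\}$ is therefore dependent in $\overline{Q^{NF}_3(\mathsf{Q}_8)}$.

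Finally I bring in the point $O$. The three ``median'' triples $\{p_1,O,(\mathsf{-e})^{(2)}\}$, $\{p_2,O,(\mathsf{-e})^{(3)}\}$, $\{p_3,O,(\mathsf{-e})^{(1)}\}$ are dependent in $Q^{NF}_3(\mathsf{Q}_8)$ (this is part of its non-Fano restriction), hence in $\overline{Q^{NF}_3(\mathsf{Q}_8)}$; writing the column of $O$ as $v=(a,b,c)\neq 0$, these three dependencies force $b=cy$, $c=az$, and $a=bx$, which compose to $a=a\,(xyz)=-a$, i.e.\ $2a=0$. If $\Char(\E)\neq 2$ this gives $a=0$, hence $c=b=0$, hence $v=0$ --- contradicting that $O$ has a nonzero gradient. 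Put geometrically: in $\overline{Q^{NF}_3(\mathsf{Q}_8)}$ the triangle $p_1p_2p_3$, the midpoints $(\mathsf{-e})^{(m)}$ (now collinear, each having collapsed onto the corresponding $\mathsf{e}^{(m)}$), and the center $O$ (which one verifies stays distinct from the other six points) form a restriction isomorphic to the Fano matroid, so $\Char(\E)=2$; since $\K\subseteq\E$, $\Char(\K)=2$, contradicting $p$ odd. I expect the main obstacle to be this structural analysis: proving that the identifications among the Dowling elements are exactly a group congruence (so $\chi$ is a well-defined homomorphism on all of $\mathsf{Q}_8$ and no $g^{(m)}$ degenerates onto a joint) and that adjoining $O$ yields a genuine Fano restriction rather than a more degenerate configuration; the separability and nonzero-gradient normalization of the representation is also delicate, but it is essentially the established Lindstr\"om--Gordon technique.
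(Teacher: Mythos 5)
Your overall architecture matches the paper's: Piff's theorem to reduce to a prime field, ruling out characteristic $0$ via Theorem~\ref{th:dowlinglin}, a Lindstr\"om--Gordon normalization of the representation (the paper's Lemma~\ref{lem:replace}), passage to the derivations matroid, and the conclusion that $\mathsf{e}^{(\ell)}$ and $(-\mathsf{e})^{(\ell)}$ must collapse, turning the non-Fano restriction into a Fano configuration. Where you genuinely diverge is the structural core: the paper runs a purely combinatorial analysis of which points can ``unite'' (Claims~\ref{lem:unite}--\ref{lem:cong}), shows the identifications define a congruence on $\mathsf{Q}_8$, and then quotes the characteristic-$2$ representability of the Fano matroid; you instead coordinatize the columns as $(x_g,1,0)$, $(0,y_g,1)$, $(1,0,z_g)$, extract a homomorphism $\chi\colon\mathsf{Q}_8\to\E^{\times}$ from the relation $x_{g_i}y_{g_j}z_{g_\ell}=-1$, and finish with an explicit determinant computation on the medians through $O$ that yields $2a=0$. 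That computation is correct (the three median dependencies give $b=cy$, $c=az$, $a=bx$, and $xyz=-1$ composes them to $a=-a$), and it buys a self-contained proof of the characteristic-$2$ obstruction in place of citing the Fano fact; both arguments ultimately rest on the same punchline, that $\mathsf{Q}_8$ is non-abelian while finite subgroups of $\E^{\times}$ are cyclic, and that every nontrivial subgroup of $\mathsf{Q}_8$ contains $-\mathsf{e}$.

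The one step that does not go through as stated is your claim that ``the surviving collinearities together with the non-vanishing of the gradients'' already show no $g^{(m)}$ becomes parallel to a joint. The derivations matroid is only required to preserve dependencies, so a configuration in which, say, every $g^{(1)}$ and every $g^{(3)}$ collapses onto $p_1$ while every $g^{(2)}$ collapses onto $p_2$ satisfies all the collinearities inherited from $Q_3(\mathsf{Q}_8)$ with nonzero columns; nothing in the Dowling part alone rules it out. To exclude it you must use the point $O$: the paper's Claim~\ref{lem:joints} chains the median $\set{p_1,O,(-\mathsf{e})^{(2)}}$ with a Dowling line and an edge to force $\overline{p_1},\overline{O},\overline{p_3}$ collinear, contradicting the Lindstr\"om non-degeneracy condition that no three of $\nabla(x'_1),\nabla(x'_2),\nabla(x'_3),\nabla(z')$ are dependent --- a condition you should add to your normalization lemma, whose stated version only requires nonzero gradients. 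With that condition and that argument inserted, your coordinatization is justified and the rest of your proof is sound.
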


The proof is based on studying the matroid induced by the derivations. By Piff's conjecture, we may consider only representations over prime fields. There are 2 main lemmas in the proof. In Lemma \ref{lem:replace} we follow the strategy of Lindstr\" om \cite{Lind85} and Gordon \cite{Gordon88}, and show that if an algebraic representation of $Q^{NF}_3(\mathsf{Q}_8)$ exists, then after replacing elements by their powers we get another representation, which is separable over $\F_p$, and certain derivations do not vanish. In Lemma \ref{lem:Fanocontain} we show that the induced derivations matroid of the representation from Lemma \ref{lem:replace} contains the Fano matroid as a minor. \remove{define an equivalence relation on the points of $Q^{NF}_3(\mathsf{Q}_8)$, and show that it induces a congruence relation on the elements of $\mathsf{Q}_8$. This forces the linearly represented matroid of the derivations, $\overline{Q^{NF}_3(\mathsf{Q}_8)}$, to contain the Fano matroid as a minor.\remove{ (while $Q^{NF}_3(\mathsf{Q}_8)$ contains the non-Fano matroid as a minor).} 
From this, we obtain Theorem \ref{th:main} as a corollary.\remove{ We stress that for the proof we are assuming that an algebraic representation of $Q^{NF}_3(\mathsf{Q}_8)$ exists; as far as we know it is possible that this is not the case (and then the theorem is trivially correct).}}
\begin{proof}
First note that $Q^{NF}_3(\mathsf{Q}_8)$ is not linear. This follows from Theorem \ref{th:dowlinglin}, because $Q_3(\mathsf{Q}_8)$ is a sub-matroid by restriction, and $\mathsf{Q}_8$ is not cyclic. Therefore, $Q^{NF}_3(\mathsf{Q}_8)$ does not admit an algebraic representation over a field of characteristic 0.

Assume that $Q^{NF}_3(\mathsf{Q}_8)$ admits an algebraic representation $\pi\colon E\to \F:=\F_p(\pi(E))$ for some prime $p$.
Set $z:=\pi(O)$, $x_i:=\pi(p_i)$, and $y_{g,i}:=\pi(g^{(i)}$) for every $1\leq i \leq 3$ and $g\in \mathsf{Q}_8$. Since $\set{p_1,p_2,p_3}$ is a base, the elements $\set{x_1,x_2,x_3}$ are algebraically independent and every $y_{g,i}$ is algebraically dependent on $\set{x_1,x_2,x_3}$. \remove{ Therefore, $\F_p(\pi(E))\subset \overline{\F_p(x_1,x_2,x_3)}$, where $\overline{\F_p(x_1,x_2,x_3)}$ denotes the algebraic closure of $\F_p(x_1,x_2,x_3)$.} However, $\F/\F_p$ is not necessarily a separable extension.
 
So in the following lemma, we replace $\pi$ with a different representation $\pi'$, where the $x_is,y_{g,i}s$, and $z$ are replaced by their powers, which will be powers of $p$, i.e., $\pi'(p_i)=x'_i=x_i^{\epsilon_i},\pi'(g^{(i)})= y '_{g,i}=y_{g,i}^{\epsilon_{g,i}},\pi'(O)=z'=z^{\epsilon_z}$ with $\epsilon_i,\epsilon_{g,i},\epsilon_z\in \set{p^m|m\in \Z}$. By Lemma \ref{lem:basicreplace}, $\pi'$ is also a representation of $Q^{NF}_3(\mathsf{Q}_8)$ over $\F_p$.

For every element $s\in \E$, with $\E$ a separable extension of $\F_p(x'_1,x'_2,x'_3)$, denote the gradient $\nabla(s)=(\bar{D}_1(s),\bar{D}_2(s),\bar{D}_3(s))$, with $\bar{D}_i(x'_j)$ being the extension of $\frac{\partial}{\partial x'_i}$ to $\E$. \remove{$\bar{D}_i(x'_j)=\delta_{i,j}$ for $1\leq i,j\leq 3$.}
\begin{lemma}\label{lem:replace}
There exists an algebraic repesentation $\pi'\colon E\to\E$ as above such that \remove{If $\pi$ is an algebraic representation of $Q^{NF}_3(\mathsf{Q}_8)$, then there exists a representation $\pi'$ of $Q^{NF}_3(\mathsf{Q}_8)$, over the same field $\F_p$ as $\pi$, such that}
\begin{enumerate}
\item The extension $\F_p(x'_1,x'_2,x'_3)\subseteq \E=\F_p(\pi'(E))$ is separable,
\item For every element $e\in E$ we have $\nabla(\pi'(e))\neq 0$, 
\item No 3 vectors of the set $\set{\nabla(x'_1),\nabla(x'_2),\nabla(x'_3),\nabla(z')}$ are linearly dependent.
\end{enumerate}
\end{lemma}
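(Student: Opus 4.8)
The plan is to build $\pi'$ from $\pi$ by a short sequence of applications of Lemma~\ref{lem:basicreplace}, keeping the three joints untwisted ($x'_i:=x_i$, so $K_0:=\F_p(x_1,x_2,x_3)$ is fixed and $\nabla(x_i)$ is automatically the $i$-th standard unit vector) and only twisting the $y_{g,i}$'s and $z$ by powers of Frobenius. Two facts about the derivations $\bar D_i$ of a finite separable extension $\E/K_0$ will be used repeatedly. (a) Since $\{x_1,x_2,x_3\}$ is then a $p$-basis of $\E$, writing $s\in\E$ in the $\E^p$-basis $\{x_1^{a_1}x_2^{a_2}x_3^{a_3}:0\le a_i<p\}$ and differentiating gives $\nabla(s)=0$ iff $s\in\E^p$. (b) By Lemma~\ref{cor:der2} applied to the (separable) minimal polynomial $f$ of $s$ over $K_0$ — so $\bar D_k(s)f'(s)=-f^{\bar D_k}(s)$ with $f'(s)\ne0$ — we get $\bar D_k(s)=0$ iff $f^{\bar D_k}=0$ iff the coefficients of $f$ lie in $\F_p(\{x_j\}_{j\ne k},x_k^p)=:K_0^{(k)}$, i.e.\ iff $s$ is separable over $K_0^{(k)}$.

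\emph{Conditions 1 and 2 (routine).} Each element of $\F$ has a $p$-power that is separable over $K_0$ (peel off the inseparable part of its minimal polynomial); replacing each $y_{g,i}$ and $z$ by such a power puts the representation inside $K_0^{\mathrm{sep}}$, so $\E:=\F_p(\pi'(E))\subseteq K_0^{\mathrm{sep}}$ is separable over $K_0$: this is condition~1, and it is preserved by all later twists. For condition~2 the joints are fine ($\nabla(x_i)\ne0$), and for each group element, as long as $\nabla(y'_{g,i})=0$, fact~(a) gives $y'_{g,i}=u^p$ with $u\in\E$, and we replace $y'_{g,i}$ by $u$; this leaves $\E$ — hence condition~1 — unchanged, and it terminates, since $y_{g,i}$ is transcendental over $\F_p$ (the point $g^{(i)}$ has rank~$1$) and therefore does not lie in $\bigcap_{m\ge0}\E^{p^m}$, which for the finitely generated field $\E$ is the algebraic closure of $\F_p$ in $\E$, a finite field.

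\emph{Condition 3 (the crux).} By fact~(b) we must arrange that $z'$ is inseparable over $K_0^{(k)}$ for each $k=1,2,3$. Among the powers $z^{p^m}$ ($m\in\Z$), those separable over $K_0$ form an up-set $\{m\ge m_0\}$, and those separable over $K_0^{(k)}$ form an up-set $\{m\ge M_k\}$ with $M_k\ge m_0$ (separable over $K_0^{(k)}\subseteq K_0$ forces separable over $K_0$). An admissible exponent — one with $m_0\le m<M_k$ for every $k$ — exists precisely when $M_k>m_0$ for all $k$, i.e.\ when $w:=z^{p^{m_0}}$, the smallest Frobenius power of $z$ separable over $K_0$, is inseparable over each $K_0^{(k)}$; and any such $m$ also delivers condition~2 for $O$. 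So the lemma reduces to proving this last assertion. For the contradiction, suppose $w$ is separable over, say, $K_0^{(3)}=\F_p(x_1,x_2,x_3^p)$. Since $w^{1/p}=z^{p^{m_0-1}}$ is \emph{not} separable over $K_0$, $w$ is not a $p$-th power in $K_0(w)$, so $K_0(w)(w^{1/p})/K_0(w)$ is purely inseparable of degree $p$; and since the coefficients of the separable minimal polynomial of $w$ over $\F_p(x_1,x_2,x_3^p)$ are $p$-th powers inside $\F_p(x_1^{1/p},x_2^{1/p},x_3)$, one checks $w^{1/p}\in K_0(w)(x_1^{1/p},x_2^{1/p})$. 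As $\{x_1,x_2,x_3\}$ is a $p$-basis of $K_0(w)$, the degree-$p$ purely inseparable subextensions of $K_0(w)(x_1^{1/p},x_2^{1/p})/K_0(w)$ are exactly the fields $K_0(w)\bigl((x_1^ax_2^b)^{1/p}\bigr)$ with $0\le a,b<p$ not both zero; hence $w\equiv x_1^ax_2^b\pmod{(K_0(w)^\times)^p}$, i.e.\ $w=x_1^ax_2^bv^p$ with $v\in K_0(w)=\F_p(x_1,x_2,x_3,w)$.

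\emph{The main obstacle} is to turn this relation into a contradiction using the matroid: the fact that $\{p_i,p_j,O\}$ is a basis of $Q^{NF}_3(\mathsf Q_8)$ for every pair (so $z$, hence $w$ and $w^{1/p}=(x_1^ax_2^b)^{1/p}v$, is transcendental over $\F_p(x_i,x_j)$ and $x_k$ is algebraic over $\F_p(x_i,x_j,z)$), together with the collinearities $\{p_k,O,(-\mathsf e)^{(j)}\}$ and the group geometry of $Q_3(\mathsf Q_8)$, should be incompatible with $w=x_1^ax_2^bv^p$ unless in fact $w^{1/p}$ is separable over $K_0$ (contradicting the minimality of $m_0$) or a transcendence relation fails. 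It is genuinely necessary to use this extra structure: an argument using only $p_1,p_2,p_3,O$ cannot work, because a representation with, say, $\pi(O)=\pi(p_1)^p+\pi(p_2)$ has the property that every Frobenius twist of $\pi(O)$ separable over $K_0$ has vanishing partial derivative in the $p_1$-direction, so condition~3 would be unreachable. By contrast, conditions~1 and~2 are the usual separability and $p$-basis bookkeeping, in the spirit of Lindstr\"om~\cite{Lind85} and Gordon~\cite{Gordon88}.
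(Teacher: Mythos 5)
Your handling of conditions~1 and~2 is fine and essentially matches what the paper does in its second and third claims (the paper argues via minimal polynomials rather than membership in $\E^{p}$, but both are the standard separability/$p$-basis bookkeeping, and your termination argument via the maximal perfect subfield of a finitely generated field is correct). The problem is condition~3, which you correctly identify as the crux and then do not prove. The paper discharges it (together with conditions 1--2 for the seven points $p_1,p_2,p_3,(\mathsf{-e})^{(1)},(\mathsf{-e})^{(2)},(\mathsf{-e})^{(3)},O$) by invoking Lindstr\"om's lemma from \cite{Lind85}, applied to the restriction of $Q^{NF}_3(\mathsf{Q}_8)$ to those seven points, which is the non-Fano matroid; that citation is the entire content of the first claim in the paper's proof. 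Your reduction of condition~3 to the assertion that $w:=z^{p^{m_0}}$ is inseparable over each $K_0^{(k)}$ is correct, but your attempt to refute ``$w$ separable over $K_0^{(3)}$'' contains a genuine error: the degree-$p$ purely inseparable subextensions of $L(x_1^{1/p},x_2^{1/p})/L$ (for $L=K_0(w)$) are \emph{not} exhausted by the fields $L\bigl((x_1^ax_2^b)^{1/p}\bigr)$ --- for instance $L\bigl((x_1+x_2)^{1/p}\bigr)$ is such a subextension and is not of that form. All that actually follows from $w^{1/p}\in L(x_1^{1/p},x_2^{1/p})$ is $w\in L^p(x_1,x_2)$, which is just a restatement of $\bar D_3(w)=0$, so the chain of reasoning produces no new information and the relation $w=x_1^ax_2^bv^p$ is unjustified. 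On top of that, even granting that relation, you explicitly leave the final contradiction open (``should be incompatible with\dots''), so condition~3 is simply not established.

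Two further remarks. First, your claimed counterexample showing that the four points $p_1,p_2,p_3,O$ alone cannot suffice is itself invalid: with $\pi(O)=\pi(p_1)^p+\pi(p_2)$ the element $\pi(O)$ is rational over $\F_p(x_1,x_2)$, so $\set{p_1,p_2,O}$ becomes dependent and this is not a representation of the $U_{3,4}$ restriction. (The underlying intuition is nonetheless sound --- one can build genuine $U_{3,4}$ representations, e.g.\ $z=x_3^px_1+x_2$, for which no Frobenius twist of $z$ alone achieves condition~3 --- and it correctly points at the fact that the collinearities $\set{p_k,O,(\mathsf{-e})^{(j)}}$ of the non-Fano restriction must enter the argument, which is exactly what Lindstr\"om's cited proof uses.) Second, you fix $x'_i:=x_i$ throughout, whereas the lemma and the cited argument allow twisting the joints as well; since your proof of condition~3 is incomplete, it is not clear that this restricted strategy can always succeed. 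To repair the proposal with least effort, replace the ``crux'' paragraph by an appeal to Lindstr\"om's result for the non-Fano minor, as the paper does, or else supply a complete derivation that actually uses the lines through $O$.
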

\begin{proof}
The proof is based on the following 3 claims:\remove{. The first, which is a result of Lindstr\"om, deals with the elements $\set{p_1,p_2,p_3,(\mathsf{-e})^{(1)},(\mathsf{-e})^{(2)},(\mathsf{-e})^{(3)},O}$, and the other 2 claims deal with the rest of the elements.}
\begin{claim}[Lindstr\" om\cite{Lind85} ]
There exists $\pi'$ as above such that
\remove{We can find a mapping of $Q^{NF}_3(\mathsf{Q}_8)$, by replacing $x_1,x_2,x_3,y_{\mathsf{-e},1},y_{\mathsf{-e},2},y_{\mathsf{-e},3},z$ with their powers $x'_1,x'_2,x'_3,y'_{\mathsf{-e},1},y'_{\mathsf{-e},2},y'_{\mathsf{-e},3},z'$,  such that}
\begin{enumerate}
\item
$y'_{\mathsf{-e},1},y'_{\mathsf{-e},2},y'_{\mathsf{-e},3},z'$ are separable over $\F_p(x'_1,x'_2,x'_3)$,
\item Every subset of $\set{\nabla(x'_1),\nabla(x'_2),\nabla(x'_3),\nabla(y'_{\mathsf{-e},1}),\nabla(y'_{\mathsf{-e},2}),\nabla(y'_{\mathsf{-e},3}),\nabla(z')}$ of size 2 is linearly independent and the same holds for every subset of size 3 of  $\{\nabla(x'_1)$, $\nabla(x'_2)$, $\nabla(x'_3)$, $\nabla(z')\}$.
\end{enumerate}
\end{claim}
This is implied by Lindstr\" om's result since the restriction to $\{p_1$, $p_2$, $p_3$, $(\mathsf{-e})^{(1)}$, $(\mathsf{-e})^{(2)}$, $(\mathsf{-e})^{(3)}$, $O\}$ is the non-Fano matroid.
\remove{This follows from Lindstr\" om \cite{Lind85}, since the restriction to $\{p_1$, $p_2$, $p_3$, $(\mathsf{-e})^{(1)}$, $(\mathsf{-e})^{(2)}$, $(\mathsf{-e})^{(3)}$, $O\}$ is the non-Fano matroid.}
From now on, we fix the values of $x'_1,x'_2,x'_3,y'_{\mathsf{-e},1},y'_{\mathsf{-e},2},y'_{\mathsf{-e},3},z'$ and modify only the rest of the values of $\pi'$. 
Fix $g$ and denote $y:=y_{g,1}$.
\remove{
So we now continue on the rest of the elements $y_{g,1}$ for $g\in \mathsf{Q_8}\setminus \set{\mathsf{-e}}$. Fix $g$ and denote $y:=y_{g,1}$. }
\begin{claim}\label{lem:sep}
By replacing $y$ with $y':=y^{p^m}$ for some $m\in \Z$, we may assume $y'$ is separable over $\F_p(x'_1,x'_2,x'_3)$.
\end{claim}
\begin{proof}
The set $\set{x'_1,y,x'_2}$ is algebraically dependent while $\set{x'_1,x'_2}$ is algebraically independent. Denote by $m_{y}$ the minimal polynomial of $y$ in $\F_p(x'_1,x'_2)[X]$. If $y$ is not separable over $\F_p(x'_1,x'_2)$ then the formal derivative is the zero polynomial (i.e., $(m_{y})'=0$), and there exists an $m\in \N$ such that $m_{y}(X)=f(X^{p^m})$, where $f'\neq 0$. After replacing $y$ with $y':=(y)^{p^m}$ we\remove{ now have that for the minimal polynomial $(m_{y'})'\neq 0$, so} may assume that $y'$ is separable algebraic over $\F_p(x'_1,x'_2)$, and, therefore, over $\F_p(x'_1,x'_2,x'_3)$.
\end{proof}
So, $\nabla(y')$ is now well defined. However, we further need to show (by possibly replacing $y'$ again) that $\nabla(y')\neq 0$.
\begin{claim}
By replacing $y$ with $y':=y^{p^m}$ for some $m\in \Z$, we may assume $y'$ is separable over $\F_p(x'_1,x'_2,x'_3)$ and $\nabla(y')\neq 0$.
\end{claim}
\begin{proof}
\remove{Following Claim \ref{lem:sep}, we may assume that $y$ is already separable over $\F_p(x'_1,x'_2,x'_3)$.} If $\bar{D}_1(y)\neq 0$ or $\bar{D}_2(y)\neq 0$ then we are done. Otherwise, $m_y$, the minimal polynomial of $y$ in $\F_p(x'_1,x'_2,x'_3)[X]$, is a monic polynomial with coefficients in $\F_p(x'_1,x'_2)$, i.e., of the form $m_y=\Sigma_{i=0}^{{\ell}}a_iX^i$ with $a_i\in \F_p(x'_1,x'_2)$ and $a_{\ell}=1$.

Now, for any derivation $\bar{D}(y)=-\frac{m_y^D(y)}{(m_y)'(y)}$ by Lemma \ref{cor:der2}.
We see that $\bar{D}_1(y)=\bar{D}_2(y)=0$ implies $(m_y)^{D_1}(y)=(m_y)^{D_2}(y)=0$. This implies that $(m_y)^{D_1}=(m_y)^{D_2}=0$, because $(m_y)^{D}$ is of smaller degree than $m_y$ (since $D(a_{\ell})=D(1)=0$). So for all $1\leq i\leq {\ell}$, $D_1(a_i)=D_2(a_i)=0$. Since $D_1,D_2$ are a basis of $\Der_{\F_p}(\F_p(x'_1,x'_2))$, this means that $D(a_i)=0$ for any derivation $D\in \Der_{\F_p}(\F_p(x'_1,x'_2))$.

So we can rewrite the coefficients $a_i=(\tilde{a}_i)^{p^{\gamma_i}}$ with $\tilde{a}_i\in \F_p(x'_1,x'_2)$ and $\gamma_i\in \N$. Let $\gamma^*_i$ be the maximal such $\gamma_i$ for each $i$ (take $\gamma^*_i=\infty$ if $a_i\in\F_p$), and take $\gamma=\min(\gamma^*_{i})$. ($\gamma<\infty$ otherwise $m_y(Y)\in \F_p[Y]$ and $\deg_{tr}(\F_p(y)/\F_p)=0$).

Since 
for any $a,b\in\F_p(x'_1,x'_2)$ we have $a^p+b^p=(a+b)^p$, we see that $m_y(X^{p^{\gamma}})=(f(X))^{p^{\gamma}}$ for some $f\in \F_p(x'_1,x'_2)[X]$.
So by replacing $y$ with $y'=y^{p^{-\gamma}}$, we now have that $f$ 
is its minimal polynomial, $f'\neq 0$, and by the choice of $\gamma$ at least one of $D_1(y'),D_2(y')$ is not zero, so we are done. Therefore, $y'$ is the desired $y'_{g,1}$.
\end{proof}

By the same argument, we can properly replace $y_{g,2}$ and $y_{g,3}$ for every $g\in \mathsf{Q_8}\setminus \set{\mathsf{-e}}$, which completes the proof of Lemma \ref{lem:replace}.\remove{, and we are done with Part 1.}
\end{proof}

Let $\pi'$ be as in Lemma \ref{lem:replace}. Consider the matroid $\overline{Q^{NF}_3(\mathsf{Q}_8)}$, induced by the derivations of $\pi'$.
\remove{We now look at $\overline{Q^{NF}_3(\mathsf{Q}_8)}$, the matroid induced by the derivations of $\pi'$, the algebraic representation of $Q^{NF}_3(\mathsf{Q}_8)$ from Lemma \ref{lem:replace}.}
\begin{lemma}\label{lem:Fanocontain}
The restriction of the matroid $\overline{Q^{NF}_3(\mathsf{Q}_8)}$ to $\set{p_1,p_2,p_3,(\mathsf{-e})^{(1)},(\mathsf{-e})^{(2)},(\mathsf{-e})^{(3)},O}$ is the Fano matroid.
\end{lemma}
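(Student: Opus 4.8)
\emph{Proof plan.} Write $\overline{M}:=\overline{Q^{NF}_3(\mathsf{Q}_8)}$ for the matroid induced by the derivations of the representation $\pi'$ from Lemma~\ref{lem:replace}, and set $S:=\{p_1,p_2,p_3,(\mathsf{-e})^{(1)},(\mathsf{-e})^{(2)},(\mathsf{-e})^{(3)},O\}$. First I would record what is immediate. Since every dependent set of $Q^{NF}_3(\mathsf{Q}_8)$ is dependent in $\overline M$, all six ``non-Fano lines'' inside $S$ are dependent triples of $\overline M$; by Lemma~\ref{lem:replace}(2) $\overline M$ is loopless; by Lemma~\ref{lem:replace}(3) the vectors $\nabla(x'_1),\nabla(x'_2),\nabla(x'_3)$ are independent, so $\overline M$ has rank $3$ and so does $\overline M|_S$; and the size-$2$ statement in the Lindstr\"om claim inside the proof of Lemma~\ref{lem:replace} says exactly that the seven gradients indexed by $S$ (namely the $\nabla(x'_i)$, the $\nabla(y'_{\mathsf{-e},j})$, and $\nabla(z')$) are pairwise non-parallel, so $\overline M|_S$ is simple. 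Hence $\overline M|_S$ is a simple rank-$3$ matroid in which those six triples are honest $3$-point lines, and the entire lemma reduces to the single extra incidence that $\{(\mathsf{-e})^{(1)},(\mathsf{-e})^{(2)},(\mathsf{-e})^{(3)}\}$ is dependent in $\overline M$.

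For that I would pass to the restriction of $\overline M$ to the ground set of $Q_3(\mathsf{Q}_8)$. It is loopless, has rank $3$ (it contains $p_1,p_2,p_3$), and is linearly representable over $\E$; by Theorem~\ref{th:dowlinglin} it therefore cannot equal $Q_3(\mathsf{Q}_8)$, since $\mathsf{Q}_8$ is not cyclic. As it preserves all dependencies of $Q_3(\mathsf{Q}_8)$, Lemma~\ref{lem:maxlines} forces it to fail simplicity, so some $g^{(t)}$ is parallel in $\overline M$ to some $h^{(t)}$ with $g\ne h$. The plan is then to show this collapse is governed by a congruence of $\mathsf{Q}_8$: chasing the Dowling lines $\{g^{(1)},a^{(2)},(g^{-1}a^{-1})^{(3)}\}$, all of which survive in $\overline M$, one gets that $g^{(1)}\parallel h^{(1)}$ implies $(g^{-1}a^{-1})^{(3)}\parallel (h^{-1}a^{-1})^{(3)}$ for every $a\in\mathsf{Q}_8$, and similarly across the other edges; hence the set $N:=\{g\in\mathsf{Q}_8:\nabla(g^{(j)})\parallel\nabla(\mathsf{e}^{(j)})\}$ is independent of $j\in\{1,2,3\}$ and is a normal subgroup of $\mathsf{Q}_8$ whose cosets are the parallel classes of the collapse. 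The parallel pair just produced gives $N\ne\{\mathsf{e}\}$; since $\{\mathsf{e},\mathsf{-e}\}$ is the unique minimal nontrivial subgroup of $\mathsf{Q}_8$ (it lies in every nontrivial subgroup), this forces $\mathsf{-e}\in N$, i.e.\ $(\mathsf{-e})^{(j)}\parallel\mathsf{e}^{(j)}$ in $\overline M$ for $j=1,2,3$. Since $\{\mathsf{e}^{(1)},\mathsf{e}^{(2)},\mathsf{e}^{(3)}\}$ is a line of $Q_3(\mathsf{Q}_8)$ (as $\mathsf{e}\cdot\mathsf{e}\cdot\mathsf{e}=\mathsf{e}$) and hence dependent in $\overline M$, replacing each point by its parallel shows $\{(\mathsf{-e})^{(1)},(\mathsf{-e})^{(2)},(\mathsf{-e})^{(3)}\}$ is dependent in $\overline M$.

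To conclude, $\overline M|_S$ is then a simple rank-$3$ matroid on $7$ points in which seven specified triples are dependent: the six non-Fano lines together with $\{(\mathsf{-e})^{(1)},(\mathsf{-e})^{(2)},(\mathsf{-e})^{(3)}\}$. These seven triples partition the $\binom{7}{2}=21$ pairs of $S$ into seven blocks of three, so no rank-$2$ flat of $\overline M|_S$ can contain four of the seven points: four collinear points would place their six pairs on one line, but any two distinct triples drawn from a $4$-set share a pair, so such triples cannot disjointly cover those six pairs. Hence the rank-$2$ flats of $\overline M|_S$ with at least three points are exactly these seven triples, which is precisely the line structure of the Fano plane, and therefore $\overline M|_S$ is the Fano matroid.

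The step I expect to be the real work is the middle paragraph: showing that the degeneration in the Dowling part is described by a congruence — equivalently a normal subgroup — of $\mathsf{Q}_8$, uniformly over the three edges, and in particular ruling out or absorbing ``mixed'' collapses such as $g^{(s)}\parallel h^{(t)}$ with $s\ne t$ or $g^{(t)}\parallel p_s$. Once that structure is established, the rigid subgroup lattice of $\mathsf{Q}_8$ makes the rest automatic, and the first and last paragraphs are routine bookkeeping with the definition of the induced matroid and the properties delivered by Lemma~\ref{lem:replace}.
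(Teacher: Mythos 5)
Your plan is correct and follows essentially the same route as the paper's proof: force a unification via Lemma \ref{lem:maxlines} and the non-linearity of $Q_3(\mathsf{Q}_8)$, show the unification is a congruence (normal subgroup) of $\mathsf{Q}_8$ by chasing Dowling lines while ruling out collapses involving the joints, $O$, or distinct edges, and then use the fact that every nontrivial subgroup of $\mathsf{Q}_8$ contains $\mathsf{-e}$ to get $\mathsf{e}\sim\mathsf{-e}$ and hence the seventh Fano line. The middle paragraph is left as a sketch, but it matches the paper's sequence of claims one for one, and your closing count showing no four points of $S$ can be collinear is a harmless elaboration of what the paper leaves implicit.
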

\begin{proof} We break the proof of this lemma into a series of claims that analyze the structure of $\overline{Q^{NF}_3(\mathsf{Q}_8)}$.
As we saw in Section \ref{sec:derivations}, any dependent set in $Q^{NF}_3(\mathsf{Q}_8)$ must also be dependent in $\overline{Q^{NF}_3(\mathsf{Q}_8)}$. However, new dependent sets can arise. Since $Q^{NF}_3(\mathsf{Q}_8)$ is of rank 3, only 3 types of new dependencies can occur:
\begin{enumerate}
\item A singleton $\set{y}$ becomes dependent,
\item A set of two non-parallel elements $\set{y_1,y_2}$ becomes dependent,
\item An independent set of three elements $\set{y_1,y_2,y_3}$ becomes dependent.
\end{enumerate}
For simplicity, we say that in type 1 the point $y$ {\em vanishes}, in type 2 the points $y_1,y_2$ {\em unite}, and in type three the {\em line} $\set{y_1,y_2,y_3}$ is added. If $y_1,y_2$ unite then for every $x$ we will say that the lines $\set{x,y_1}$ and $\set{x,y_2}$ unite (since they become the same line in $\overline{Q^{NF}_3(\mathsf{Q}_8)}$). \remove{Geometrically, this is how we ``actually see" this, e.g.,} In Example \ref{ex:alg2lin} the point $2$ vanished, the points $1,4$ united, and the line $\set{1,3,6}$ was added. We can also see that since $1,4$ united, the lines $\set{1,5,3}$ and $\set{4,6,3}$ united so $3,5$ and $6$ are collinear in $\overline{M}$.

Now consider the geometric representation of $\overline{Q^{NF}_3(\mathsf{Q}_8)}$. The following observation follows directly from Lemma \ref{lem:replace}.\remove{, the matroid induced by the derivations of $\pi'$, the algebraic representation of $Q^{NF}_3(\mathsf{Q}_8)$ from Lemma \ref{lem:replace}.}
\begin{observation}\label{ob:nonvanish}
None of the points vanish. Furthermore,\remove{ by the result of Lindstr\" om,} no 3 points of the set $\set{p_1,p_2,p_3,O}$ are collinear in $\overline{Q^{NF}_3(\mathsf{Q}_8)}$.
\end{observation}
\remove{This follows directly from Lemma \ref{lem:replace}.} We next make a key point - the matroid $\overline{Q^{NF}_3(\mathsf{Q}_8)}$ is not simple.

\begin{claim}\label{lem:unite}
Some of the points of $Q^{NF}_3(\mathsf{Q}_8)$ unite in $\overline{Q^{NF}_3(\mathsf{Q}_8)}$.
\end{claim}
\begin{proof}
Assume that no points unite. Recall that $\overline{Q^{NF}_3(\mathsf{Q}_8)}$ is linearly representable. Since $Q^{NF}_3(\mathsf{Q}_8)\setminus O= Q_3(\mathsf{Q}_8)$, it follows from Lemma \ref{lem:maxlines} that no line can be added, other than, possibly, lines passing through $O$. But this implies that $\overline{Q^{NF}_3(\mathsf{Q}_8)}\setminus O=Q^{NF}_3(\mathsf{Q}_8)\setminus O= Q_3(\mathsf{Q}_8)$ is linearly representable, contradicting Theorem \ref{th:dowlinglin}. 
\end{proof}
So some points necessarily unite. We will write $a\sim b$ if the elements $a,b$ of $Q^{NF}_3(\mathsf{Q}_8)$ unite.
This is clearly an equivalence relation, so we denote the representative of a point $a$ in $\overline{Q^{NF}_3(\mathsf{Q}_8)}$  by $\overline{a}$.

We now show some restrictions on the points that can unite.\remove{ First, we show that if 2 distinct points unite, then neither of the points is $O$, nor any of the joints.}
\begin{claim}\label{lem:O}
The point $O$ does not unite with any other point.
\end{claim}
\begin{proof}
Suppose $O$ unites with a point $a$. Then $a$ lies on one of the edges \remove{(i.e., the edges of $Q_3(\mathsf{Q}_8)=Q^{NF}_3(\mathsf{Q}_8)\setminus O$)}, say $\set{p_1,p_2}$. So the set $\set{p_1,O,p_2}$ is dependent in $\overline{Q^{NF}_3(\mathsf{Q}_8)}$, contradicting Observation \ref{ob:nonvanish}.
\end{proof}

\begin{claim}\label{lem:joints}
A joint cannot unite with any other point.
\end{claim}
\begin{proof}
From Observation \ref{ob:nonvanish}\remove{ and claim \ref{lem:O}}, the joints cannot unite with each other or with $O$. So a joint might either unite with a point on an adjacent edge or with a point on the opposite edge. However, both options are impossible:
\begin{itemize}
\item Assume, without loss of generality, that $p_1$ unites with $g^{(1)}$. So the following 3 lines unite: $\ell_1=\set{p_1,O,(-e)^{(2)}}$, $\ell_3=\set{g^{(1)},(-e)^{(2)},((-eg)^{-1})^{(3)}}$, and $\ell_3=\set{p_1,((-eg)^{-1})^{(3)},p_3}$. So $\overline{p_1},\overline{O},\overline{p_3}$ lie on the same line in $\overline{Q^{NF}_3(\mathsf{Q}_8)}$, contradicting Observation \ref{ob:nonvanish}.
\item Assume, without loss of generality, that $p_1$ unites with $g^{(2)}$. So $\set{p_2,p_1,p_3}$ is dependent in $\overline{Q^{NF}_3(\mathsf{Q}_8)}$, again contradicting Observation \ref{ob:nonvanish}.\qedhere
\end{itemize}
\end{proof}

\remove{We next show that if 2 points unite then they lie on the same edge in $Q^{NF}_3(\mathsf{Q}_8)$.}
\begin{claim}
If $i\neq j$, then for every $g,h\in \mathsf{Q}_8$, we have $g^{(i)}\nsim h^{(j)}$.
\end{claim}
\begin{proof}
Assume, without loss of generality, that $g^{(1)}\sim h^{(2)}$. So the 2 lines $\{p_1,g^{(1)},p_2\}$ and $\{p_2,h^{(2)},p_3\}$ unite. Therefore,  $\set{p_1,p_2,p_3}$ are collinear in $\overline{Q^{NF}_3(\mathsf{Q}_8)}$, contradicting Observation \ref{ob:nonvanish}.
\end{proof}
So we now know that if 2 points unite with each other, then they are between the same joints. We proceed to show that this unification is symmetric.
\begin{claim}
Let $g,h\in \mathsf{Q}_8$. If $g^{(i)}\sim h^{(i)}$ for some $i\in \set{1,2,3}$, then $(g^{-1})^{(\ell)}\sim (h^{-1})^{(\ell)}$ and $g^{(\ell)}\sim h^{(\ell)}$ for all $\ell\in \set{1,2,3}$.
\end{claim}
\begin{proof}
Assume, without loss of generality, that $g^{(1)}\sim h^{(1)}$. So there is a line in $\overline{Q^{NF}_3(\mathsf{Q}_8)}$ passing through the 4 points $\{\overline{g^{(1)}},$ $\overline{(g^{-1})^{(2)}},$ $\overline{(h^{-1})^{(2)}},$ $\overline{e^{(3)}} \}$, since the line $\{g^{(1)},(g^{-1})^{(2)},e^{(3)}\}$ unites with the line $\{h^{(1)},(h^{-1})^{(2)},e^{(3)}\}$. If we assume that $(g^{-1})^{(2)}\nsim (h^{-1})^{(2)}$, then these 4 points are distinct. But since $\{\overline{p_2},\overline{(g^{-1})^{(2)}},\overline{(h^{-1})^{(2)}},\overline{p_3} \}$ are also on a line, the two lines must be the same line (because $\overline{(g^{-1})^{(2)}}$ and $\overline{(h^{-1})^{(2)}}$ are 2 distinct points on both lines). This line must also be the same as $\{\overline{p_2},\overline{g^{(1)}},\overline{p_1}\}$ (now $\overline{p_2},\overline{g^{(1)}}$ are 2 distinct points on both lines), so $\{\overline{p_1},\overline{p_2},\overline{p_3}\}$ are collinear, a contradiction. So $(g^{-1})^{(2)}\sim (h^{-1})^{(2)}$, and now by symmetric arguments $g^{(3)}\sim h^{(3)}$. By further applying these arguments symmetrically,  $(g^{-1})^{(\ell)}\sim (h^{-1})^{(\ell)}$ and $g^{(\ell)}\sim h^{(\ell)}$ for all $\ell\in \set{1,2,3}$.
\end{proof}
So we can now safely write $g\sim h$ for $g,h\in \mathsf{Q}_8$, and the meaning is clear. This induces an equivalence relation on $\mathsf{Q}_8$. We next show that it is also a congruence relation.\remove{, i.e., if $g\sim g'$ and $h\sim h'$, then $gh\sim g'h'$. }
\begin{claim}\label{lem:cong}
For every $g,g',h,h'\in \mathsf{Q}_8$, if $g\sim g'$ and $h\sim h'$, then $gh\sim g'h'$.
\end{claim}
\begin{proof}
Assume $g\sim g'$ and $h\sim h'$. From the dependencies of $Q^{NF}_3(\mathsf{Q}_8)$, we have 2 lines, $\{h^{(1)},$ $g^{(2)},$ $((gh)^{-1})^{(3)}\}$ and $\set{(h')^{(1)},(g')^{(2)},((g'h')^{-1})^{(3)}}$, which in $\overline{Q^{NF}_3(\mathsf{Q}_8)}$ must be the same line (they share 2 points since  $\overline{g^{(1)}}=\overline{g'^{(1)}}$ and $\overline{h^{(2)}}=\overline{h'^{(2)}}$ ). If $(gh)^{-1}\nsim (g'h')^{-1}$ then this line must also be the same line as $\set{\overline{p_1},\overline{((gh)^{-1})^{(3)}},\overline{((g'h')^{-1})^{(3)}},\overline{p_3}}$; therefore, the same line as $\set{\overline{p_1},\overline{h^{(1)}},\overline{p_2}}$. So $\set{\overline{p_1},\overline{p_2},\overline{p_3}}$ are on the same line, a contradiction. Therefore, $(gh)^{-1}\sim (g'h')^{-1}$, implying $gh \sim g'h'$.
\end{proof}
We know from the previous claims that there exist $h_1,h_2\in \mathsf{Q}_8$, with $h_1\neq h_2$, such that $h_1\sim h_2$. Therefore, the kernel of the congruence relation, i.e., the equivalence class of $\mathsf{e}$, is non-trivial. Thus, it must contain $\mathsf{-e}$, as $\mathsf{-e}$ is the square of any element of $\mathsf{Q}_8$ other than $\mathsf{e},-\mathsf{e}$.

We are now ready for the punch. We show that the Fano matroid is a restriction of $\overline{Q^{NF}_3(\mathsf{Q}_8)}$: 
we have shown that $\mathsf{e}\sim -\mathsf{e}$, so $\mathsf{e}^{(\ell)},(-\mathsf{e})^{(\ell)}$ unite for $\ell=1,2,3$. Since $\set{\mathsf{e}^{(1)},\mathsf{e}^{(2)},\mathsf{e}^{(3)}}$ are collinear in $Q^{NF}_3(\mathsf{Q}_8)$, we have that $\{\mathsf{(-e)}^{(1)}$, $\mathsf{(-e)}^{(2)}$, $\mathsf{(-e)}^{(3)}\}$ are collinear in $\overline{Q^{NF}_3(\mathsf{Q}_8)}$. \remove{Since the restriction of $Q^{NF}_3(\mathsf{Q}_8)$ to $\set{p_1,p_2,p_3,\mathsf{(-e)}^{(1)},\mathsf{(-e)}^{(2)},\mathsf{(-e)}^{(3)},O}$ is the non-Fano matroid, and none of these points vanished or united with each other, }Combining with Observation \ref{ob:nonvanish} and Claims \ref{lem:O} and \ref{lem:joints}, it follows that the restriction of $\overline{Q^{NF}_3(\mathsf{Q}_8)}$ to $\set{p_1,p_2,p_3,\mathsf{(-e)}^{(1)},\mathsf{(-e)}^{(2)},\mathsf{(-e)}^{(3)},O}$ is the Fano matroid. This concludes the proof of Lemma \ref{lem:Fanocontain}.
\end{proof}

Since $\overline{Q^{NF}_3(\mathsf{Q}_8)}$ is linearly representable over $\E$, so is the Fano matroid, which is its minor. Therefore, $\Char(\E)=2$, since the Fano matroid is linearly representable only over fields of characteristic 2 (cf. ~\cite[Proposition 6.4.8]{Oxl11}). Since $\F_p\subset \E$ is a field extension, it follows that $p=2$, which completes the proof of Theorem \ref{th:main}.
\end{proof}

\begin{remark}
In fact, we can now even strengthen Lemma \ref{lem:Fanocontain}: Claim \ref{lem:cong} implies that the equivalence class  $N$ of $\mathsf{e}$ is a normal sub-group of $\mathsf{Q}_8$, and that $\si(\overline{Q^{NF}_3(\mathsf{Q}_8)}\setminus O)$, the simple matroid associated with  $\overline{Q^{NF}_3(\mathsf{Q}_8)}\setminus O$ (this means we look at each equivalence class of $\overline{Q^{NF}_3(\mathsf{Q}_8)}\setminus O$ as unique point in $\si(\overline{Q^{NF}_3(\mathsf{Q}_8)}\setminus O)$, see \cite[Chapter 1.7]{Oxl11}), is $Q_3(\mathsf{Q}_8/N)$. Therefore, $N=\mathsf{Q}_8$, because no quotient $\mathsf{Q}_8/N$, other than the trivial group, is a subgroup of the multiplicative group of a field of characteristic 2. So $\si(\overline{Q^{NF}_3(\mathsf{Q}_8)})$ is itself the Fano matroid.
\end{remark}

\begin{corollary}
The matroid $Q^{NF}_3(\mathsf{Q}_8)\oplus R_7$ is a non-algebraic, $2$-linearly representable matroid.
\end{corollary}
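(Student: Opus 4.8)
The plan is to assemble the corollary from the pieces already established, treating it essentially as a bookkeeping exercise about direct sums. The two properties to verify for $Q^{NF}_3(\mathsf{Q}_8)\oplus R_7$ are: (i) it is $2$-linearly representable, and (ii) it is not algebraic over any field.

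\textbf{Multi-linear representability.} First I would recall that $Q^{NF}_3(\mathsf{Q}_8)$ is $2$-linearly representable over $\F_{49}$ by Lemma \ref{lem:q3nf}, via the explicit matrix $\mathbf{A^*_{\rho}}$. Next, $R_7$ is $k$-linearly representable over any field of characteristic $7$ for every $k$ by construction (using the block matrix $\mathbf{B}$); in particular it is $2$-linearly representable over $\F_{49}$, since $\Char(\F_{49})=7$. Then I would use the standard fact that a direct sum of $k$-linear representations (over the \emph{same} field $\F$ and with the \emph{same} block size $k$) is a $k$-linear representation of the direct sum of the matroids: one simply forms the block-diagonal matrix whose diagonal blocks are the two representations, and checks that the rank of a union of column-sets splits as the sum of ranks over the two parts, so division by $k$ still yields the correct (integer) matroid rank. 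Concretely, since both matroids are $2$-linearly representable over $\F_{49}$, the block-diagonal matrix $\left[\begin{matrix}\mathbf{A^*_{\rho}}&0\\0&\mathbf{B}\end{matrix}\right]$ (with $\mathbf{B}$ taken with $k=2$ over $\F_{49}$) is a $2$-linear representation of $Q^{NF}_3(\mathsf{Q}_8)\oplus R_7$ over $\F_{49}$.

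\textbf{Non-algebraicity.} Here I would argue by contradiction: suppose $M:=Q^{NF}_3(\mathsf{Q}_8)\oplus R_7$ is algebraic, say over a field $\K$. By Piff's conjecture (Theorem \ref{th:basefield}) we may assume $\K$ is a prime field. Since the class of algebraic matroids is minor-closed, and each summand is a restriction (hence a minor) of $M$, both $Q^{NF}_3(\mathsf{Q}_8)$ and $R_7$ would be algebraically representable over $\K$. But Theorem \ref{th:M7} (Gordon) says $R_7$ is algebraic over $\K$ only if $\Char(\K)=7$, while Theorem \ref{th:main} says $Q^{NF}_3(\mathsf{Q}_8)$ is algebraic over $\K$ only if $\Char(\K)=2$. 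These are incompatible, so no such $\K$ exists and $M$ is not algebraic.

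\textbf{Expected obstacle.} None of this is deep — the substance is all in Lemma \ref{lem:q3nf} and Theorem \ref{th:main}, which are assumed. The only point requiring a sentence of care is that algebraic representability passes to restrictions (direct summands), which follows from minor-closedness of the algebraic class (stated in the introduction), together with the observation that a direct summand of $M$ is obtained from $M$ by deleting the other summand. I would also make sure to invoke the same field $\K$ for both summands — which is exactly why the direct sum fails to be algebraic even though each summand individually is — rather than allowing different base fields. With these remarks in place the corollary follows immediately from combining Lemma \ref{lem:q3nf}, Theorem \ref{th:M7}, and Theorem \ref{th:main}.
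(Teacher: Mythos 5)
Your proposal is correct and follows essentially the same route as the paper: $2$-linear representability of both summands over $\F_{49}$ (Lemma \ref{lem:q3nf} and Theorem \ref{th:M7lin}) yields a $2$-linear representation of the direct sum, and non-algebraicity follows by minor-closedness together with the incompatible characteristic constraints from Theorems \ref{th:main} and \ref{th:M7}. The only superfluous step is the appeal to Piff's conjecture, which is not needed since both characteristic restrictions already apply to arbitrary fields $\K$.
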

\begin{proof}
Since both $Q^{NF}_3(\mathsf{Q}_8)$ and $R_7$ are $2$-linearly representable over $\F_{7^2}$ by Lemmas \ref{lem:q3nf} and \ref{th:M7lin}, so is their direct sum. On the other hand, if $Q^{NF}_3(\mathsf{Q}_8)\oplus R_7$ is algebraically representable over a field $\K$ so are $Q^{NF}_3(\mathsf{Q}_8)$ and $R_7$ (algebraic representations are minor closed). Therefore, by Theorem \ref{th:main} $\Char(\K)=2$, but by Theorem \ref{th:M7} $\Char(\K)=7$, which is a contradiction.
\end{proof}
Since every multi-linear matroid is secret-sharing, we obtain as an immediate corollary the following important statement about secret-sharing matroids.
\begin{corollary}
There exists a secret-sharing matroid that is not algebraic.
\end{corollary}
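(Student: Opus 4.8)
The plan is to deduce this statement immediately from the previous corollary together with the containment of classes recalled in the introduction. Let $M:=Q^{NF}_3(\mathsf{Q}_8)\oplus R_7$. By the previous corollary $M$ is $2$-linearly representable (over $\F_{49}$), hence in particular multi-linearly representable in the sense of Definition~\ref{def:$k$-linear}.

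Next I would invoke the fact, due to Simonis and Ashikmin~\cite{SA98}, that the class of multi-linear matroids is contained in the class of secret-sharing matroids (equivalently, almost affinely representable / partition representable matroids). Applying this to $M$ shows that $M$ is a secret-sharing matroid.

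Finally, the previous corollary also asserts that $M$ is not algebraic. Hence $M$ witnesses the statement: it is a secret-sharing matroid that is not algebraic, completing the proof. There is no real obstacle here — all the substantive work (the $2$-linear representability of $Q^{NF}_3(\mathsf{Q}_8)$ via $\mathbf{A^*_{\rho}}$, and the non-algebraicity via Theorems~\ref{th:main} and~\ref{th:M7}) has already been carried out, and this corollary is merely the translation of that result into the language of secret-sharing matroids through the known inclusion of classes.
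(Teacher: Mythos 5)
Your proposal is correct and matches the paper exactly: the paper also derives this corollary immediately from the preceding one by citing the containment of multi-linear matroids in secret-sharing matroids from Simonis and Ashikmin~\cite{SA98}. No further comment is needed.
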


\begin{remark}
We note that $Q^{NF}_3(\mathsf{Q}_8)\oplus R_7$ is a non-connected matroid of rank 6 with 45 elements. However, it is possible to find a multi-linear representation of a connected matroid of rank 3 with 38 elements that also contains $Q^{NF}_3(\mathsf{Q}_8)$ and $R_7$ by restriction. The matroid is constructed by connecting the representations in the natural way, and then deleting parallel elements. An interesting question is what is the smallest non-algebraic, multi-linearly representable matroid.
\end{remark}

\section{Proofs of Lemmas \ref{lem:maxlines} and \ref{lem:q3nf}}\label{sec:proofs}
In this section we give the postponed technical proofs of Lemma \ref{lem:maxlines} and Lemma \ref{lem:q3nf}.

\remove{
\begingroup
\def\thelemma{\ref{lem:maxlines}}
\begin{lemma}
Let $M$ be a matroid on the same ground set as $Q_3(G)$, such that every dependent set of $Q_3(G)$ is dependent in $M$. If $M$ is simple and $r(M)=3$, then $M=Q_3(G)$.
\end{lemma}
\addtocounter{lemma}{-1}
\endgroup

\begin{lemma}[Lemma \ref{lem:maxlines}]
Let $M$ be a matroid on the same ground set as $Q_3(G)$, such that every dependent set of $Q_3(G)$ is dependent in $M$. If $M$ is simple and $r(M)=3$, then $M=Q_3(G)$.
\end{lemma}}
\begin{proof}[Proof of Lemma \ref{lem:maxlines}]
\noindent We repeatedly use the fact that any 2 elements of $M$ can only lie on a single line, i.e., if $\{a,b,x_1$, $\dots,$ $x_n\}$ are collinear and $\set{a,b,y_1,\dots,y_m}$ are collinear then $\set{a,b,x_1,\dots,x_n,y_1,\dots,y_m}$ are all on the same line. Recall that in $Q_3(G)$ every point lies on one of the lines $\set{p_1,p_2}$,$\set{p_2,p_3}$, $\set{p_1,p_3}$, which we refer to as the {\em edges} of $Q_3(G)$.

Suppose by contradiction that $M$ contains an additional 3-element dependent set $\set{a,b,c}$. There are 2 cases to consider:

\begin{enumerate}
\item Two of the points, say $a$ and $b$, are on of the same edge, w.l.o.g $\set{p_1,p_2}$. Therefore, $c$ is on a different edge (since $\set{a,b,c}$ was independent in $Q_3(G)$), w.l.o.g $\set{p_1,p_3}$. So in $M$, the line $\set{a,b}=\set{p_1,p_2}$ contains $c$ and, thus, also $p_3$. But in $Q_3$ all the elements lie on one of the lines $\set{p_1,p_2}$, $\set{p_1,p_3}$, $\set{p_2,p_3}$. Therefore, this holds also in $M$. It follows that all the elements of $M$ lie on the same line, so $r(M)=2$, a contradiction.

\item Each of the points is on a different edge. W.l.o.g., $a$ is on the line $\set{p_1,p_2}$, $b$ is on $\set{p_2,p_3}$, and $c$ is on $\set{p_1,p_3}$. So denote $a=g^{(1)},b=h^{(2)},c=k^{(3)}$ with $g,h,k\in G$. Thus, $\set{a,b,((hg)^{-1})^{(3)}}$ is a dependent set in $Q_3(G)$, and hence in $M$. Since $\set{a,b,c}$ was not dependent in $Q_3(G)$, we have that $hgk\neq e\Rightarrow k\neq (hg)^{-1}$. Therefore, $\set{a,b,c}$ and $\set{a,b,((hg)^{-1})^{(3)}}$ are both dependent in $M$, which forces  $\set{a,b,c,((hg)^{-1})^{(3)}}$ to be on the same line. This line contains also $p_1$ and $p_3$ (since $c$ and $((hg)^{-1})^{(3)}$ lie on the line $\set{p_1,p_3}$). Thus, this line contains $p_2$ as well (since $\set{p_1,a,p_2}$ is dependent). So we get a contradiction as in case 1.\qedhere
\end{enumerate}
\end{proof}
\remove{
We split into 4 cases:
\begin{enumerate}
\item All of the 3 points are joints, i.e., $\set{a,b,c}=\set{p_1,p_2,p_3}$. In this case $\set{p_1,p_2,p_3}$ are collinear in $M$. But in $Q_3$ all the elements lie on one of the lines $\set{p_1,p_2}$, $\set{p_1,p_3}$, $\set{p_2,p_3}$. Therefore, this holds also in $M$, so it follows that all the elements of $M$ lie on the same line. Thus, $r(M)=2$, a contradiction.
\item Two of the points, say $a$ and $b$, are joints, denote them, w.l.o.g., by $p_1$ and $p_2$, and the third is not, i.e.,  $c\neq p_3$. Since $\set{a,b,c}=\set{p_1,p_2,c}$ is a new dependent set, then $c$ does not lie on the line $\set{p_1,p_2}$ in $Q_3(G)$, so it must lie on the line $\set{p_1,p_3}$ or $\set{p_2,p_3}$. But in both cases this means that $\set{p_1,p_2,c,p_3}$  are collinear in $M$, and we get a contradiction as in case 1.
\item Only one of the points is a joint, i.e., w.l.o.g $a=p_1$ and $b,c\notin\set{p_2,p_3}$. Again, since $\set{p_1,b,c}$ are not collinear in $Q_3(G)$, then w.l.o.g. either 
\begin{enumerate}
\item Both $b$ and $c$ lie on the line $\set{p_2,p_3}$ in $Q_3(G)$, in which case $\set{p_1,b,c,p_2,p_3}$ are collinear in $M$, resulting in the same contradiction as in case 1.
\item Both $b$ and $c$ are not on the line $\set{p_2,p_3}$ in $Q_3(G)$. So, w.l.o.g., $b$ lies on the line $\set{p_1,p_2}$ and $c$ lies on the line $\set{p_1,p_3}$, again forcing $\set{p_1,b,p_2,c,p_3}$ to be on one line in $M$.
\item W.l.o.g., $c$ is on the line $\set{p_2,p_3}$, and $b$ is not on this line in $Q_3(G)$, and $b$ is on $\set{p_1,p_2}$ in $Q_3(G)$ and, therefore, also in $M$. Thus, $\set{p_1,b,p_2}$ and $\set{p_1,b,c}$ must be the same line in $M$, therefore, also the same line as $\set{p_2,c,p_3}$. Again, this means $\set{p_1,p_2,p_3}$ are collinear in $M$, and contradiction as in case 1.
\end{enumerate}
\item None of the points are joints, i.e., $\set{a,b,c}\cap\set{p_1,p_2,p_3}=\emptyset$. There are 2 cases to consider here,
\begin{enumerate}
\item $2$ of the points are on the same line between the joints in $Q_3(G)$, and the third is on a line between different joints, i.e., w.l.o.g., $a$ and $b$ are on the line $\set{p_1,p_2}$ and $c$ is on $\set{p_2,p_3}$ in $Q_3(G)$. But this implies that $\set{a,b,p_1,p_2,c}$ are on the same line in $M$, and this line therefore also includes $p_3$. So we get a contradiction as in case 1.
\item Each of the points is on a line between different joints in $Q_3(G)$, i.e., w.l.o.g., $a$ is on the line $\set{p_1,p_2}$, $b$ is on $\set{p_2,p_3}$, and $c$ is on $\set{p_1,p_3}$. So denote $a=g^{(1)},b=h^{(2)},c=k^{(3)}$ with $g,h,k\in G$. Thus, $\set{a,b,((hg)^{-1})^{(3)}}$ is a dependent set in $Q_3(G)$, so also in $M$. Since $\set{a,b,c}$ was not dependent in $Q_3(G)$, we have that $hgk\neq e\Rightarrow k\neq (hg)^{-1}$. Therefore, $\set{a,b,c}$ and $\set{a,b,((hg)^{-1})^{(3)}}$ are both dependent in $M$, which forces  $\set{a,b,c,((hg)^{-1})^{(3)}}$ to be on the same line, which must, therefore, contain also $p_1$ and $p_3$ (since $c$ and $((hg)^{-1})^{(3)}$ lie on the line $\set{p_1,p_3}$). Thus, this line contains $p_2$ as well (since $\set{p_1,a,p_2}$ is dependent). So again, we get a contradiction as in case 1.\qedhere
\end{enumerate}
\end{enumerate}
\end{proof}
}

\noindent We next prove Lemma \ref{lem:q3nf}. We first recall that $\mathbf{A^*_{\rho}}$ is the block matrix
\[
  \kbordermatrix{
    & p_1 & p_2 & p_3 & \mathsf{e}^{(1)} & -\mathsf{e}^{(1)} & & -\mathsf{k}^{(1)} & \mathsf{e}^{(2)} & & -\mathsf{k}^{(2)} & \mathsf{e}^{(3)} & & -\mathsf{k}^{(3)} & O \\
     & I_{2}&0&0&-I_{2}&-I_2&\ldots &-I_{2} & 0&\ldots &0&\rho (\mathsf{e})&\ldots &\rho (-\mathsf{k})&I_2\\
     & 0&I_{2}&0&\rho (\mathsf{e})&\rho(-\mathsf{e})&\ldots &\rho (-\mathsf{k}) &-I_{2}&\ldots &-I_{2}&0&\ldots &0&I_2\\
     & 0&0&I_{2}&0&0&\ldots &0 &\rho (\mathsf{e})&\ldots &\rho (-\mathsf{k})&-I_{2}&\ldots &-I_{2}&I_2\\
  }{\raisebox{-3ex}{,}}
\]
where $\rho\colon \mathsf{Q}_8\to \GL_2(F_{7^2})$ is the following representation
\begin{eqnarray*}
\rho(\mathsf{\pm e})=\pm \left(\begin{matrix}
 1&0\\0& 1\end{matrix}\right),\rho(\mathsf{\pm i})=\pm\left(\begin{matrix}
 \zeta_4&0\\0& -\zeta_4\end{matrix}\right),\rho(\mathsf{\pm j})=\pm\left(\begin{matrix}
0& -1\\ 1&0\end{matrix}\right),\rho(\mathsf{\pm k})=\pm\left(\begin{matrix}
0&-\zeta_4\\ -\zeta_4&0\end{matrix}\right),
\end{eqnarray*} 
and $\zeta_4$ denotes the primitive root of unity of order 4.

\remove{
\begingroup
\def\thelemma{\ref{lem:q3nf}}
\begin{lemma}
Over the field $\F_{7^2}$, the matrix $\mathbf{A^*_{\rho}}$ is a $2$-linear representation of a matroid which contains both $Q_3(\mathsf{Q}_8)$ and the non-Fano matroid\remove{ $F^-_7$} as minors.
\end{lemma}
\addtocounter{lemma}{-1}
\endgroup
\begin{lemma}[Lemma \ref{lem:q3nf}]
Over the field $\F_{7^2}$, the matrix $\mathbf{A^*_{\rho}}$ is a $2$-linear representation of a matroid which contains both $Q_3(\mathsf{Q}_8)$ and the non-Fano matroid\remove{ $F^-_7$} as minors.
\end{lemma}}
In order to verify that this is indeed a 2-linear representation, we need to verify that the rank function of the matroid is properly defined. This happens if and only if the rank of every relevant sub-matrix of $\mathbf{A^*_{\rho}}$ is even.
Since a proof by computer calculation would give little insight, we give a more constructive proof. We will see when adding a block column of $I_k$s results in a $k$-linear representation of a new matroid, which will show that $\mathbf{A^*_{\rho}}$ is not a $2$-linear representation over fields with characteristics 3 or 5.
\begin{proof}[Proof of Lemma \ref{lem:q3nf}]
As $\rho$ is fixed-point free, by Theorem \ref{th:dowlingmul} the rank function is properly defined for the restriction to $\set{1,2,3,\mathsf{e}^{(1)},...,(-\mathsf{k})^{(3)}}$, and that restriction is $Q_3(\mathsf{Q}_8)$. So we are left with checking subsets which contain $O$. Ranks of some subsets are easily shown to be integers, e.g., for any $g\in\mathsf{Q}_8$ we have
$$
\rank\left[\begin{matrix}
I_2&0&I_2\\
0&I_2&I_2\\
0&0&I_2
\end{matrix}
\right] =
\rank \left[\begin{matrix}
I_2&-I_2&I_2\\
0&\rho(g)&I_2\\
0&0&I_2
\end{matrix}
\right]=6.
$$
So $r(\set{1,2,O})=r(\set{1,g^{(1)},O})=3\in \N$. 
Also, for $g_i\neq g_j$ in $\mathsf{Q_8}$, we know from Theorem \ref{th:dowlingmul} that $\rank \left[\begin{matrix}
-I_2&-I_2\\
\rho(g_i)&\rho(g_j)\\
0&0
\end{matrix}
\right]=4,$ because $r(\set{g_i^{(1)},g_j^{(1)}})=2$ in the Dowling geometry. It follows that $\rank \left[\begin{matrix}
-I_2&-I_2&I_2\\
\rho(g_i)&\rho(g_j)&I_2\\
0&0&I_2
\end{matrix}
\right]=6.$ Thus, $r(\set{g_i^{(1)},g_j^{(1)},O})=3\in \N$ in $Q^{NF}_3(\mathsf{Q}_8)$. 

It remains to verify that the ranks of 
\begin{eqnarray*}
I)\left[\begin{matrix}
-I_2&0&I_2\\
\rho(g_i)&-I_2&I_2\\
0&\rho(g_j)&I_2
\end{matrix}
\right]{\raisebox{-3ex}{,}} & & II)
\left[\begin{matrix}
0&-I_2&I_2\\
0&\rho(g)&I_2\\
I_2&0&I_2
\end{matrix}
\right]{\raisebox{-3ex}{,}}
\end{eqnarray*}
are even, since the rest follows by permutation of the row/column blocks.
\remove{Ranks of other relevant sub-matrices follow from symmetrical arguments.}

For case II we see that
\begin{eqnarray*}\rank
\left[\begin{matrix}
0&-I_2&I_2\\
0&\rho(g)&I_2\\
I_2&0&I_2
\end{matrix}
\right]=2+\rank(\rho(\mathsf{e})-\rho(-g)),
\end{eqnarray*}
where $-g$ denotes $-\mathsf{e}\cdot g$. As $\rho$ is fixed-point free, we see, by Observation \ref{ob:subtraction}, that 
\begin{equation}
\rank(\rho(\mathsf{e})-\rho(-g))=\left\lbrace\begin{matrix}
0&g=-\mathsf{e}\\
2&{\rm otherwise.}\end{matrix}\right.
\end{equation}

For case I, we get that
\begin{eqnarray*}\rank
\left[\begin{matrix}
-I_2&0&I_2\\
\rho(g_i)&-I_2&I_2\\
0&\rho(g_j)&I_2
\end{matrix}
\right]=&4+\rank(I_2+\rho(g_j)+\rho(g_jg_i)).
\end{eqnarray*}
Since
$\rho(g_j),\rho(g_jg_i)\in \set{\pm \left(\begin{matrix}
1&0\\
0&1
\end{matrix}
\right),\pm \left(\begin{matrix}
\zeta_4&0\\
0&-\zeta_4
\end{matrix}
\right),\pm \left(\begin{matrix}
0&1\\
-1&0
\end{matrix}
\right),\pm \left(\begin{matrix}
0&\zeta_4\\
\zeta_4&0
\end{matrix}
\right)},$
it is easy to see that  $I_2+\rho(g_j)+\rho(g_jg_i)=\left(\begin{matrix}
a+b\zeta_4&c+d\zeta_4\\
-c+d\zeta_4&a-b\zeta_4
\end{matrix}
\right)$ with $a,b,c,d\in\N$ and $(|a|+|b|+|c|+|d|)\in \set{1,3}$ (it is $1$ if $g_j=-\mathsf{e}$, $g_i=-\mathsf{e}$, or $g_jg_i=-\mathsf{e}$, and 3 otherwise). Therefore, \begin{center}$\det(I_2+\rho(g_j)+\rho(g_jg_i))=a^2+b^2+c^2+d^2\neq 0 \mod\ 7$.\end{center} So $\rank(I_2+\rho(g_j)+\rho(g_jg_i))=2,$ as desired.

\begin{remark}
Notice that if we tried to define the same multi-linear representation $\mathbf{A^*_{\rho}}$ in characteristic 3 (by defining $\rho\colon\mathsf{Q}_8\to \F_{9}$ in a similar way), the assignment $a=b=c=1$ would give us $\rank(I_2+\rho(g_j)+\rho(g_jg_i))=1$. In characteristic 5 this would happen for, e.g., $a=2,b=1$. Therefore, this proof does not work in these characteristics. Indeed, in these characteristics the matrix $\mathbf{A^*_{\rho}}$ is not a $2$-linear representation of a matroid.
\end{remark}

Now the restriction to $\set{p_1,p_2,p_3,\mathsf{e}^{(1)},...,(-\mathsf{k})^{(3)}}$, by Theorem \ref{th:dowlingmul}, is $Q_3(\mathsf{Q}_8)$. And direct calculation shows that the restriction to $\set{p_1,p_2,p_3,(\mathsf{-e})^{(1)},(\mathsf{-e})^{(2)},(\mathsf{-e})^{(3)},O}$ is the non-Fano matroid, e.g.,
\begin{eqnarray*}
r(\set{p_1,(\mathsf{-e})^{(2)},O}) &= \rank\left[\begin{matrix}
I_2&0&I_2\\
0&-I_2&I_2\\
0&-I_2&I_2
\end{matrix}
\right]/2 &= 2,\\
r(\set{(\mathsf{-e})^{(1)},(\mathsf{-e})^{(2)},(\mathsf{-e})^{(3)}}) &= \rank\left[\begin{matrix}
-I_2&0&-I_2\\
-I_2&-I_2&0\\
0&-I_2&-I_2
\end{matrix}
\right]/2 &= 3.
\end{eqnarray*}
\end{proof}

\subsection*{Acknowledgements}
I would like to thank my advisors Amos Beimel and Ilya Tyomkin for drawing my attention to this problem, for many helpful discussions, and for the guidance and assistance in the preparation of this paper.

\bibliography{rbib}
\bibliographystyle{plain}
\remove{
\appendix
\section{Definition of Rank-3 Dowling Group Geometries}\label{app:dowling}
We now give the definition of the Rank-3 Dowling Group Geometries, as given in \cite{Aner}. We remark that in the literature (e.g., \cite{Dowlinga}\cite{Oxl11}), the matroid is generally defined differently, also because the following definition is not easily generalized for higher ranks. The resulting matroid in the usual construction is isomorphic to this construction by a relabelling of the elements.

\remove{\begin{definition} \label{def:Dowling}}

Let $G=\{e=g_{1},g_{2},...,g_{n}\}$ be a finite group. The rank-3 Dowling geometry of $G$, denoted $Q_{3}(G)$, is a matroid of rank 3 on the set $E=\{p_1,p_2,p_3,g^{(1)}_1,...,g^{(1)}_n,g^{(2)}_1,..,g^{(2)}_n,g^{(3)}_1,...,g^{(3)}_n\}$. So there are 3 ground set elements $p_1,p_2,p_3$, called the joints, which are not related to the group, and for every element $g\in G$, there are 3 elements in the ground set of the matroid $g^{(1)},g^{(2)},g^{(3)}\in E$.

\begin{figure}
$$
\setlength{\unitlength}{3cm}
\begin{picture}(1.5,1)
\put(0,0){\line(1,1){1}}
\put(2,0){\line(-1,1){1}}
\put(0,0){\line(1,0){2}}

\put(0,0){\circle*{.05}}
\put(-0.15,-0.01){$p_1$}
\put(1,1){\circle*{.05}}
\put(1.0,1.07){$p_2$}
\put(2,0){\circle*{.05}}
\put(2.05,0){$p_3$}

\put(0.1,0.1){\circle*{.05}}
\put(-0.05,0.1){$g^{(1)}_1$}
\put(0.2,0.2){\circle*{.05}}
\put(0.25,0.15){$g^{(1)}_2$}
\put(0.3,0.3){\circle*{.05}}
\put(0.15,0.3){$g^{(1)}_3$}
\put(0.9,0.9){\circle*{.05}}
\put(0.75,0.92){$g^{(1)}_n$}

\put(1.1,0.9){\circle*{.05}}
\put(1.14,0.9){$g^{(2)}_1$}
\put(1.2,0.8){\circle*{.05}}
\put(1.24,0.8){$g^{(2)}_2$}
\put(1.3,0.7){\circle*{.05}}
\put(1.34,0.7){$g^{(2)}_3$}
\put(1.9,0.1){\circle*{.05}}
\put(1.94,0.1){$g^{(2)}_n$}

\put(1.8,0){\circle*{.05}}
\put(1.7,-0.15){$g^{(3)}_1$}
\put(1.6,0){\circle*{.05}}
\put(1.5,-0.15){$g^{(3)}_2$}
\put(1.4,0){\circle*{.05}}
\put(1.3,-0.15){$g^{(3)}_3$}
\put(0.2,0){\circle*{.05}}
\put(0.2,-0.15){$g^{(3)}_n$}

\end{picture}$$
\caption{Rank-3 Dowling geometry with some lines missing.\label{fig:Dowling}}
\end{figure}

Figure \ref{fig:Dowling} is a geometric representation of $Q_{3}(G)$, with additional lines that go through the points $g^{(1)}_i,g^{(2)}_j,g^{(3)}_{\ell}$  if and only if  $g_j\cdot g_i\cdot g_{\ell}=e$ (e.g., there is always a line that goes through $g^{(1)}_1,g^{(2)}_1,g^{(3)}_1$ since $g_1=e$ and $e\cdot e\cdot e=e$).

Notice that in the geometric representation of $Q_3(G)$, every point of $Q_3(G)$ lies on one of the following lines: $\set{p_1,p_2}$,$\set{p_2,p_3}$, $\set{p_1,p_3}$. We refer to these lines as the {\em edges} of $Q_3(G)$.

So the set of bases of $Q_{3}(G)$ is every set of 3 elements except
\begin{enumerate}
\item A set of 3 points on the same edge.
\item The sets $\{\{g^{(1)}_i,g^{(2)}_j,g^{(3)}_{\ell}\}|g_{j}\cdot g_{i}\cdot g_{\ell} = e\}.$
\end{enumerate}

Figures \ref{fig:DTM}$(a)$ and \ref{fig:DTM}$(b)$ represent the Dowling Geometries of the trivial group and $\Z_2$ respectively.

\remove{
not in $C_{1}\cup C_{2}\cup C_{3}\cup C_{4}$, where,
\begin{small}
\begin{eqnarray*}
C_{1}&=&\{\{p_1,p_2,g^{(1)}_i\}|1\leq i\leq n\} \cup \{\{p_1,g^{(1)}_i,g^{(1)}_j\}|1\leq i< j\leq n\} \cup \{\{p_2,g^{(1)}_i,g^{(1)}_j\}|1\leq i<j\leq n\},\\ 
C_{2}&=&\{\{p_2,p_3,g^{(2)}_i\}|1\leq i\leq n\}\cup \{\{p_2,g^{(2)}_i,g^{(2)}_j\}|1\leq i < j\leq n\} \cup \{\{p_3,g^{(2)}_i,g^{(2)}_j\}|1\leq i<j\leq n\},\\ 
C_{3}&=&\{\{p_1,p_3,g^{(3)}_i\}|1\leq i\leq n\}\cup \{\{p_1,g^{(3)}_i,g^{(3)}_j\}|1\leq i< j\leq n\} \cup \{\{p_3,g^{(3)}_i,g^{(3)}_j\}|1\leq i< j\leq n\},\\
C_{4}&=& \{\{g^{(1)}_i,g^{(2)}_j,g^{(3)}_{\ell}\}|g_{j}\cdot g_{i}\cdot g_{\ell} = e\}.
\end{eqnarray*}
\end{small}

Equivalently, it can be defined by the geometric representation in Figure \ref{fig:Dowling}, with additional lines that go through points $g^{(1)}_i,g^{(2)}_j,g^{(3)}_{\ell}$  if and only if  $g_j\cdot g_i\cdot g_{\ell}=e$ (e.g., there is always a line that goes through $g^{(1)}_1,g^{(2)}_1,g^{(3)}_1$ since $g_1=e$ and $e\cdot e\cdot e=e$).\remove{\footnote{In the literature, the matroid is sometimes defined a bit differently, e.g., a line goes through $g^{(1)}_i,g^{(2)}_j,g^{(3)}_{\ell}$  if and only if  $(g_j)^{-1}\cdot (g_i)^{-1}\cdot g_{\ell}=e$. This is isomorphic by renaming of the ground set elements.}} Figures \ref{fig:DTM}$(a)$ and \ref{fig:DTM}$(b)$ represent the Dowling Geometries of the trivial group and $\Z_2$ respectively.

\end{definition}
}

\begin{figure}
$$
\setlength{\unitlength}{3cm}
\begin{picture}(2,1.1) 
\put(0,0.1){\line(1,2){.5}}
\put(1,0.1){\line(-1,2){.5}}
\put(0,0.1){\line(1,0){1}}
\put(0,0.1){\circle*{.05}}
\put(-0.15,0.1){$p_1$}
\put(.5,1.1){\circle*{.05}}
\put(.43,1.15){$p_2$}
\put(1,0.1){\circle*{.05}}
\put(1.05,0.1){$p_3$}
\put(.25,.6){\circle*{.05}}
\put(.1,.6){$e^{(1)}$}
\put(.5,0.1){\circle*{.05}}
\put(.5,-0.05){$e^{(3)}$}
\put(.75,0.6){\circle*{.05}}
\put(.8,0.6){$e^{(2)}$}
\qbezier(0.25,0.6)(.5,-.4)(.75,.6)
\put(.45,-.17){$(a)$}
\end{picture}
\begin{picture}(1,1.1)
\put(0,0.1){\line(1,2){.5}}
\put(1,0.1){\line(-1,2){.5}}
\put(0,0.1){\line(1,0){1}}
\put(0,0.1){\circle*{.05}}
\put(-0.15,0.1){$p_1$}
\put(.5,1.1){\circle*{.05}}
\put(.43,1.15){$p_2$}
\put(1,0.1){\circle*{.05}}
\put(1.05,0.1){$p_3$}
\put(.17,.43){\circle*{.05}}
\put(.0,.43){$e^{(1)}$}
\put(.34,.76){\circle*{.05}}
\put(.17,.76){$g^{(1)}_2$}
\put(.33,0.1){\circle*{.05}}
\put(.33,-0.05){$e^{(3)}$}
\put(.66,0.1){\circle*{.05}}
\put(.66,-0.05){$g^{(3)}_2$}
\put(.67,0.76){\circle*{.05}}
\put(.73,0.76){$e^{(2)}$}
\put(.84,0.43){\circle*{.05}}
\put(.90,0.43){$g^{(2)}_2$}
\qbezier(.17,.43)(.33,-.37)(.67,0.76)
{\color{mygray}\qbezier(.17,.43)(0.77,-0.23)(.84,0.43)}
{\color{myblue}\qbezier(.34,.76)(1.27,0.37)(.33,0.1)}
{\color{myred}\qbezier(.3,.76)(0.75,-0.55)(.6,0.76)}
\put(.45,-.17){$(b)$}
\end{picture}$$
\caption{Geometric Representation of the matroids $Q_3(\{1\})$ and $Q_3(\Z_2)$.\label{fig:DTM}}
\end{figure}
}

\end{document}